\theoremstyle{plain}
\newtheorem{theorem}{Theorem}[section]
\newtheorem{question}[theorem]{Question}
\newtheorem{lemma}[theorem]{Lemma}
\newtheorem{remark}[theorem]{Remark}
\newtheorem{corollary}[theorem]{Corollary}
\theoremstyle{definition}
\newtheorem{definition}[theorem]{Definition}
\newtheorem*{convention*}{Convention}
\newcommand{\dmo}{\DeclareMathOperator}
\newcommand{\R}{\mathbb{R}}
\newcommand{\Z}{\mathbb{Z}}
\dmo{\Mod}{Mod}
\dmo{\Ig}{\mathcal{I}_g}
\dmo{\Span}{span}
\dmo{\Diff}{Diff}
\dmo{\Homeo}{Homeo}
\dmo{\dist}{dist}
\dmo\BDiff{BDiff}
\dmo\SO{SO}
\dmo\slide{sl}
\dmo\im{im}
\dmo\id{id}
\dmo\Fix{Fix}
\dmo\scl{scl}
\dmo\Stab{Stab}
\dmo\Mcg{Mcg}
\dmo{\Hg}{\mathcal{H}_g}
\dmo{\Tg}{\mathcal{T}_g}
\renewcommand{\epsilon}{\varepsilon}
\newcommand{\coloneq}{\mathrel{\mathop:}\mkern-1.2mu=}
\newcommand{\nc}{\mathcal{NC}}
\newcommand{\cd}{\mathcal{C}^\dagger}
\title{Quasi-morphisms on surface diffeomorphism groups}
\dedicatory{Dedicated to Mladen Bestvina on the occasion of his 60\textsuperscript{th} birthday.}
\author{Jonathan Bowden}
\address{School of Mathematical Sciences, 
Monash University
Victoria, 3800, Australia}
\email{jonathan.bowden@monash.edu}
\author{Sebastian Hensel}
\address{Mathematisches Institut der Universit\"at M\"unchen,
Theresienstra\ss{}e 39, 80333 M\"unchen}
\email{hensel@math.lmu.de}
\author{Richard Webb}
\address{Department of Mathematics,
The University of Manchester,
Oxford Road,
Manchester,
M13 9PL}
\email{richard.webb@manchester.ac.uk}
\begin{document} 
\maketitle

\begin{abstract}
We show that the identity component of the group of diffeomorphisms of a closed oriented surface of positive genus admits many unbounded quasi-morphisms. As a corollary, we also deduce that this group is not uniformly perfect and its fragmentation norm is unbounded, answering a question of Burago--Ivanov--Polterovich. As a key tool we construct a hyperbolic graph on which these groups act, which is the analog of the curve graph for the mapping class group.

\end{abstract}

\section{Introduction} 

The general problem of constructing quasi-morphisms has played a
prominent role in geometric group theory, symplectic geometry and
dynamics since Gromov's introduction of bounded cohomology in the
early 80s \cite{Gro}. In the context of mapping class groups 
there are now many constructions of quasi-morphisms, starting with the 
work of Endo--Kotschick \cite{EnKo}, Bestvina--Fujiwara \cite{BF}, and
Hamenst\"adt \cite{Ham} to name only a few. Using the natural projection 
of the full diffeomorphism group to the
mapping class group this then yields many non-trivial quasi-morphisms on the
{\em full} diffeomorphism group $\Diff(S_g)$ of any surface
$S_g $ of genus $g \ge 2$.

In view of this it remains to study whether the identity component
$\Diff_0(S_g)$ admits any non-trivial quasi-morphisms.

If one restricts to the subgroup of area-preserving diffeomorphisms, or more precisely the
subgroup of Hamiltonian diffeomorphisms, there are constructions of quasi-morphisms
due to Ruelle \cite{Ru} and Gambaudo--Ghys \cite{GaGh} (see also \cite{Py, BM}). This motivates the following question going back to Burago--Ivanov--Polterovich \cite{BIP}:
\begin{question}
Does the group $\Diff_0(S_g)$ admit any quasi-morphisms that are unbounded?
\end{question}
To put this question in context, recall that for any compact manifold
the identity component of the diffeomorphism group
$\Diff_0(M)$ is perfect by classical results of Mather and
Thurston \cite{M1,M2,T}\ and therefore does not admit any non-trivial
homomorphisms to an abelian group (cf.\ also \cite{Mann}). In fact, 
combining results of Burago--Ivanov--Polterovich \cite{BIP} and Tsuboi \cite{Tsu, Tsu_even}, for any closed, oriented manifold $M$ which is a sphere, or has dimension $\dim M \neq 2$ or $4$, the group $\Diff_0(M)$ is uniformly perfect: any element can be written as a product of commutators of uniformly bounded length. It is easy to see that this is an obstruction to the existence of an unbounded quasi-morphism.

\smallskip The simplest manifolds that are not covered by these general results
are the closed surfaces of genus $g\geq 1$. Our main result shows that these groups indeed show drastically different behaviour: 

\begin{theorem}\label{intro:main-theorem}
  For $g\geq 1$ the space $\widetilde{\mathrm{QH}}(\Diff_0(S_g))$ of
  unbounded quasi-morphisms on $\Diff_0(S_g)$ is
  infinite dimensional.
\end{theorem}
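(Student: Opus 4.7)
The strategy I would follow is the Bestvina--Fujiwara method: construct a Gromov-hyperbolic graph $X$ on which $\Diff_0(S_g)$ acts by isometries, exhibit infinitely many independent loxodromic elements, and verify a properness-style axiom sufficient to produce the Bestvina--Fujiwara counting quasi-morphisms. Since that method yields an infinite-dimensional space of unbounded quasi-morphisms as soon as two independent loxodromic elements satisfying the axiom exist, the theorem reduces to exhibiting infinitely many such elements with pairwise transverse axes and verifying the axiom.

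For the hyperbolic graph I would use (a variant of) the graph $\cd$ announced in the abstract, whose vertices are genuine smooth simple closed curves on $S_g$ -- \emph{not} isotopy classes -- with adjacency given by a geometric condition such as disjointness. By construction $\Diff_0(S_g)$ acts on $\cd$, and in particular on its non-separating subgraph $\ncd$. Proving Gromov hyperbolicity is a substantial independent step, which I would attack by adapting Masur--Minsky-style surgery and unicorn-path arguments to this smooth (``fine'') setting, since isotopy-class bigon/surgery tricks are no longer available and one must control curves geometrically.

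Next I would produce infinitely many pairwise independent loxodromic elements of $\Diff_0(S_g)$ for the action on $\ncd$. Although a pseudo-Anosov is never isotopic to the identity, its essential stretching can be implemented inside $\Diff_0(S_g)$ by diffeomorphisms supported in a small disk or annulus -- for instance via point-pushing along a complicated loop, or by realising a pseudo-Anosov on a subsurface and compensating with an isotopy on the complement. Each such map should translate a test curve by a definite amount in $\ncd$ per iterate, and varying the support and the underlying pseudo-Anosov dynamics should yield the required infinite family of pairwise transverse axes.

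The principal obstacle is the WPD-style condition. For the classical mapping class group acting on the curve graph, stabilizers of vertices are virtually abelian, which is what powers Bestvina--Fujiwara; here, however, every vertex of $\ncd$ has an infinite-dimensional stabilizer in $\Diff_0(S_g)$, so classical WPD fails outright. I expect the decisive step to be a suitably weakened properness statement -- roughly, that modulo the subgroup of diffeomorphisms with small local support, only boundedly many elements can coarsely fix a sufficiently long segment of the axis -- strong enough to force the Bestvina--Fujiwara counting quasi-cocycles to be well-defined and unbounded. Once such a weak WPD is in hand, the standard machinery delivers the desired infinite-dimensional $\widetilde{\mathrm{QH}}(\Diff_0(S_g))$.
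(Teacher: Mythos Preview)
Your overall strategy matches the paper's: act on the fine curve graph $\cd(S)$, prove hyperbolicity, produce loxodromics via point-pushing pseudo-Anosovs, and apply Bestvina--Fujiwara. Two points of divergence are worth flagging.

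\textbf{Hyperbolicity.} The paper does not run unicorn or surgery arguments directly in the fine graph. Instead it proves a distance formula (Lemma~\ref{lem:realise-full}): for transverse $\alpha,\beta$ in minimal position in $S-P$, one has $d^\dagger(\alpha,\beta)=d_{\mathcal{C}^s(S-P)}([\alpha]_{S-P},[\beta]_{S-P})$. Hyperbolicity of $\cd(S)$ then follows immediately from the four-point condition together with Rasmussen's \emph{uniform} hyperbolicity of surviving curve graphs $\mathcal{C}^s(S-P)$ over all finite $P$. Your direct approach may well succeed, but the paper's reduction is short and also drives the loxodromicity proof.

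\textbf{Independence: the genuine gap.} Your proposed ``weak WPD modulo diffeomorphisms of small support'' is where the plan is in trouble. The paper explicitly observes that no WPD-type mechanism can work here: stabilisers of any finite set of vertices in $\cd(S)$ contain $\Diff_c(D^2)$, and more strongly, $\Diff_0(S)$ is simple, so by Osin and Dahmani--Guirardel--Osin it admits no non-elementary acylindrical (hence no WPD) action on a hyperbolic space. Your quotient-by-small-support idea is not obviously well-posed (there is no canonical normal ``small support'' subgroup to quotient by), and you give no mechanism for turning such a statement into the actual $\not\sim$ condition of Bestvina--Fujiwara.

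The paper bypasses this entirely with a one-line idea you are missing: the forgetful map $p\colon\cd(S)\to\mathcal{C}(S)$ sending a curve to its isotopy class is $1$-Lipschitz and $\Diff(S)$-equivariant, and $\Diff_0(S)$ acts \emph{trivially} on $\mathcal{C}(S)$. Thus any quasi-axis of an element of $\Diff_0(S)$ has $p$-image a single point. Take a point-pushing pseudo-Anosov $\psi\in\Diff_0(S)$ and any $\varphi\in\Diff(S)$ representing a pseudo-Anosov on the closed surface; then $\varphi^k\psi\varphi^{-k}\in\Diff_0(S)$ has quasi-axis projecting to $[\varphi^k\alpha]$, which for large $k$ is farther than the Bestvina--Fujiwara constant $B$ from $[\alpha]$ in $\mathcal{C}(S)$. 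Since $\Diff_0(S)$ cannot move either projection, no $\Diff_0(S)$-translate of one axis comes within $B$ of the other, giving $\psi\not\sim\varphi^k\psi\varphi^{-k}$ directly. No properness is needed.
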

\noindent Here $\widetilde{\mathrm{QH}}(G)$ denotes
the space of quasi-morphisms modulo the set of bounded functions
on $G$ (see Section~\ref{sec:quasi-morphisms} for further details).

The existence of an single unbounded quasi-morphism on
$\Diff_0(S_g)$ already has the following consequence, which
answers a question of Burago--Ivanov--Polterovich \cite{BIP}:

\begin{corollary}\label{cor:frag}
  For $g \geq 1$ the group $\Diff_0(S_g)$ is not uniformly perfect and has unbounded fragmentation norm.
\end{corollary}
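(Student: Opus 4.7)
The plan is to combine Theorem \ref{intro:main-theorem} with two well-known reductions. First, the existence of an unbounded homogeneous quasi-morphism on a group forces the commutator length $\mathrm{cl}$ to be unbounded. Second, a displacement ``swindle'' of Burago--Ivanov--Polterovich, applicable on $S_g$ for every $g\geq 1$ because any small disk is displaceable inside a larger ambient disk, shows that every diffeomorphism supported in a disk is a single commutator in $\Diff_0(S_g)$. Together these give $\mathrm{cl}(f)\leq\|f\|_{\mathrm{frag}}$ for every $f\in\Diff_0(S_g)$, and unboundedness of the right-hand side follows from that of the left.

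Starting from an unbounded quasi-morphism $\psi$ on $\Diff_0(S_g)$ provided by Theorem \ref{intro:main-theorem}, I would first homogenize via $\phi(f)=\lim_{n\to\infty}\psi(f^n)/n$. This yields a homogeneous quasi-morphism with the same defect $D$; if $\phi$ were bounded then $\psi$ would differ from a bounded function by a homomorphism to $\R$, but since $\Diff_0(S_g)$ is perfect (Mather--Thurston) this would force $\psi$ itself to be bounded, a contradiction. Homogeneity implies conjugation invariance and hence $|\phi([g,h])|\leq D$, so $|\phi(f)|\leq(2k-1)D$ whenever $f$ is a product of $k$ commutators. Choosing $f_n$ with $|\phi(f_n)|\to\infty$ forces $\mathrm{cl}(f_n)\to\infty$, proving non-uniform perfectness. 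For the second reduction, given $f_i$ supported in an open disk $B_i\subset S_g$, choose a larger disk $B_i'\supset B_i$ and a diffeomorphism $h_i\in\Diff_0(S_g)$ supported in $B_i'$ acting as a ``shift toward an interior fixed point'', so that the iterates $h_i^k(B_i)$ are pairwise disjoint and accumulate at that point. Then $G_i=\prod_{k\geq 0}h_i^k f_i h_i^{-k}$ defines a diffeomorphism satisfying $h_iG_ih_i^{-1}=f_i^{-1}G_i$, whence $f_i=[G_i,h_i]$. Applied to each factor in a fragmentation $f=f_1\cdots f_n$ this yields $\mathrm{cl}(f)\leq n=\|f\|_{\mathrm{frag}}$.

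The main substantive input is Theorem \ref{intro:main-theorem} itself; the remaining steps are formal. The one genuinely technical point is verifying smoothness of the infinite product $G_i$ at the limit point of its factors' supports, which is achieved by arranging $h_i$ to contract sharply near its fixed point so that the $C^k$-norms of the conjugates $h_i^k f_i h_i^{-k}$ decay rapidly enough. This is a standard move in the Burago--Ivanov--Polterovich framework and presents no conceptual obstacle beyond what is already encoded in Theorem \ref{intro:main-theorem}.
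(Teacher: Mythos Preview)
Your proof is correct and follows the same route as the paper: an unbounded quasi-morphism (Theorem~\ref{intro:main-theorem}) forces non-uniform perfectness, and then the Burago--Ivanov--Polterovich comparison $\mathrm{cl}\leq\|\cdot\|_{\mathrm{Frag}}$ yields unboundedness of the fragmentation norm. The paper packages these two steps as black-box citations (Lemma~\ref{lem:uniform_QM} and Corollary~\ref{cor:frag_unbounded}), whereas you unpack them explicitly; one cosmetic remark is that your detour through perfectness to see that the homogenization $\phi$ is unbounded is unnecessary, since $|\psi-\phi|\leq D(\psi)$ already gives this directly.
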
 
\noindent Note that by the main result of \cite{BIP} the fragmentation norm on $\Diff_0(S^2)$ is uniformly
bounded and so the assumptions in the corollary are optimal (compare Section~\ref{sec:spherefail} on why our method does not apply to $\Diff_0(S^2)$).  

%


\subsection*{Outline of Proof:} 

The quasi-morphisms in Theorem~\ref{intro:main-theorem} are built via
group actions on hyperbolic spaces using a general method 
due to Bestvina--Fujiwara \cite{BF} (see also \cite{EpsteinFujiwara, Fuji}). 

We will apply this to a variant of the \emph{curve graph}.
Curve graphs were originally defined by Harvey \cite{Harvey} and, due to the foundational work of
Masur and Minsky \cite{MM1, MM2},  have quickly become one of the central tools to
study mapping class groups. In particular, Masur and Minsky were the first to show that the curve graph is hyperbolic
\cite{MM1}.
Many variants of curve graphs have since then been used to study subgroups of mapping class groups
and their relatives. We just want to highlight that, following a strategy suggested by Calegari, 
curve graphs have been used to construct
quasi-morphisms on \emph{big mapping class groups}, e.g. the
mapping class group of the plane minus a Cantor set (see \cite{Bavard}).

Before moving on, we want to mention one feature of curve graphs which will be 
important for us: their hyperbolicity constant is independent of the 
surface \cite{Aougab, Bowditch, CRS, Unicorns, Bicorns}. Similar results hold for some of the
variants of curve graphs (see below).

\subsection*{A new curve graph}
Traditionally, curve graphs have vertices corresponding to 
\emph{isotopy classes} of simple closed curves (or similar objects) on surfaces and are thus by definition unable to capture any information about $\Diff_0$.

In this article we instead study the graph
$\cd(S)$ whose vertices correspond to the actual (essential) simple closed
curves on $S$ (not their isotopy classes). An edge connects two
vertices when the corresponding curves are disjoint.

In order to study $\cd(S)$ we relate its geometry to the geometry of
curve graphs of the surface $S$ punctured at finite sets $P \subset S$.
For technical reasons, we need to use so-called surviving curve graphs
$\mathcal{C}^s(S-P)$ here (compare Remark~\ref{rem:why-surviving}).
The key tool is given by Lemma~\ref{lem:realise-full} which shows that the distance between
two vertices in $\cd(S)$ can be computed using the distance in the
surviving curve graph $\mathcal{C}^s(S-P)$ of the punctured
surface $S-P$, provided the puncture set $P$ is chosen correctly (depending on the 
two vertices).

Just like for usual curve graphs, these surviving curve graphs $\mathcal{C}^s(S-P)$ are
hyperbolic with a constant independent of the choice of $P$, as was shown by A.~Rasmussen \cite{Rasmussen}.
Combining this with the distance estimate, hyperbolicity of $\cd(S)$ follows.


\subsection*{Building quasi-morphisms}  At this point we have an action of $\Diff(S_g)$ on
the hyperbolic graph $\cd(S)$. In order to produce quasi-morphisms
using \cite{BF} we now need to find diffeomorphisms which act {\em hyperbolically} (i.e. with positive asymptotic translation length) and that are \emph{independent} (i.e. there is a bound on how far their axes fellow travel even after applying any diffeomorphism to either axis). We refer the reader to Section~\ref{sec:bg} for details on these notions.


We remark that in most applications of the Bestvina--Fujiwara construction \cite{BF} the independence of
elements is guaranteed by showing that the action in question satisfies \emph{WPD}. We emphasise that this is
not the case here because the stabiliser of any finite collection of points
in $\cd(S)$ contains a copy of the diffeomorphism group of a disk. In fact more is true. There is no action of $\Diff_0(S)$ satisfying \emph{WPD}: if this were the case, then $\Diff_0(S)$ would admit a non-elementary acylindrical action on a hyperbolic space \cite{Osin} and therefore have uncountably many normal subgroups \cite{DGO}. 
But $\Diff_0(S)$ is known to be simple (since it is perfect, and \cite{Eps} shows that the commutator subgroup is simple).

\subsection*{Constructing independent hyperbolic elements}
Our hyperbolic elements will be constructed using \emph{point-pushing pseudo-Anosov} mapping classes. These
are isotopically-trivial diffeomorphisms of $S$ which fix a set of points $P$ but are pseudo-Anosov
as mapping classes of $S-P$ (compare Section~\ref{sec:curve-graphs}). Using the connection between $\cd(S)$ and $\mathcal{C}^s(S-P)$ described above we
 show that any such map acts hyperbolically on $\cd(S)$, see Lemma~\ref{lem:pas-hyperbolic}.

We then construct two elements which are independent for the action of $\Diff_0(S)$ (see Theorem~\ref{thm:maineasy}). 
The key observation is that there is a Lipschitz projection map $p \colon \cd(S) \to \mathcal{C}(S)$ which is equivariant
with respect to the action of $\Diff(S)$ on both spaces. Clearly $\Diff_0(S)$ acts trivially on
the usual curve graph $\mathcal{C}(S)$. Therefore axes in $\cd(S)$ of hyperbolic elements in $\Diff_0(S)$  are (coarsely) contained in the fibers of $p$.

Hence, if one takes a diffeomorphism $\varphi$ with positive translation 
length on $\mathcal{C}(S)$, and any hyperbolic element $\psi\in \Diff_0(S)$, then
$\psi$ and $\varphi^k\psi\varphi^{-k}$ will be independent for large $k$ because their axes project to points in $\mathcal{C}(S)$ which are far apart, and this distance cannot be changed by the action of $\Diff_0(S)$. This then yields the desired independent elements, see Section~\ref{sec:proofmainthm} for details. Our method also provides unbounded quasi-morphisms on the group of Hamiltonian diffeomorphisms, see Theorem~\ref{thm:area_pres}. 

 \subsection*{The case of homemorphisms}  In general a quasi-morphism on a topological group need not be continuous, since one can always add a discontinuous bounded  function to any given quasi-morphism. However for {\em homogeneous} quasi-morphisms on $\Diff_0(S_g)$ automatic continuity does indeed hold. This fact is due to Kotschick, but a proof unfortunately did not appear in the published version of \cite{Kot} (cf.\ also \cite{EPP}) and therefore we give a proof in Appendix~\ref{sec:automatic_cont}.  This then easily implies the existence of non-trivial unbounded quasi-morphisms on the identity component of surface homeomorphisms:

 
 \begin{theorem}\label{intro:main-theorem-homeo}
  For $g\geq 1$ the space $\widetilde{\mathrm{QH}}(\Homeo_0(S_g))$ of
  unbounded quasi-morphisms on $\Homeo_0(S_g)$ is
  infinite dimensional.
\end{theorem}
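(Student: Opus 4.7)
The plan is to extend the unbounded quasi-morphisms on $\Diff_0(S_g)$ produced by Theorem~\ref{intro:main-theorem} to $\Homeo_0(S_g)$, using $C^0$-density of $\Diff_0(S_g)$ in $\Homeo_0(S_g)$ together with the automatic continuity established in Appendix~\ref{sec:automatic_cont}.

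I would start with a homogeneous representative $\bar\phi$ of a nontrivial class in $\widetilde{\mathrm{QH}}(\Diff_0(S_g))$; every class has such a representative via homogenisation. By Appendix~\ref{sec:automatic_cont}, $\bar\phi$ is continuous in the $C^0$ topology, in particular at $\id$. Since every surface homeomorphism is a $C^0$-limit of diffeomorphisms, $\Diff_0(S_g)$ is $C^0$-dense in $\Homeo_0(S_g)$, so extending $\bar\phi$ by continuity is the natural strategy.

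For $f\in\Homeo_0(S_g)$, pick a sequence $f_n\in\Diff_0(S_g)$ with $f_n\to f$. The defect inequality gives
\[
|\bar\phi(f_n)-\bar\phi(f_m)|\le D(\bar\phi)+|\bar\phi(f_nf_m^{-1})|;
\]
since $f_nf_m^{-1}\to\id$ and $\bar\phi$ is continuous at $\id$, the second term goes to zero, so $(\bar\phi(f_n))$ has bounded oscillation and hence limit points. I would set $\tilde\phi(f)$ to be any such limit point, choosing the constant sequence on $\Diff_0(S_g)$ so that $\tilde\phi|_{\Diff_0(S_g)}=\bar\phi$. Passing to the limit in the defect inequality along approximations $f_ng_n\to fg$ then shows that $\tilde\phi$ is itself a quasi-morphism on $\Homeo_0(S_g)$, with defect controlled by $D(\bar\phi)$, and its unboundedness is inherited from $\bar\phi$.

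The assignment $[\bar\phi]\mapsto[\tilde\phi]$ is a linear injection from $\widetilde{\mathrm{QH}}(\Diff_0(S_g))$ into $\widetilde{\mathrm{QH}}(\Homeo_0(S_g))$: boundedness of $\tilde\phi$ on $\Homeo_0(S_g)$ would force boundedness of its restriction $\bar\phi$ to $\Diff_0(S_g)$. Combined with Theorem~\ref{intro:main-theorem}, this yields an infinite-dimensional subspace of $\widetilde{\mathrm{QH}}(\Homeo_0(S_g))$. The main technical subtlety is that the defect inequality only gives Cauchy convergence up to $D(\bar\phi)$, so the extension is not canonical as a function; the saving grace is that the resulting ambiguity is bounded and hence invisible at the level of $\widetilde{\mathrm{QH}}$.
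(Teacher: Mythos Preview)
Your proposal is correct and follows the same strategy as the paper: take the homogeneous quasi-morphisms on $\Diff_0(S_g)$ from Theorem~\ref{intro:main-theorem}, invoke the $C^0$-continuity from Appendix~\ref{sec:automatic_cont}, and extend by $C^0$-density of diffeomorphisms among surface homeomorphisms. The paper observes (by inspecting the proof in the appendix, where one gets $|\bar\phi(f)-\bar\phi(g)|\le C_S D(\bar\phi)/n$ once $f^n g^{-n}$ lies in a fixed $C^0$-neighbourhood of the identity) that the continuity is actually strong enough to force genuine Cauchy convergence and hence a unique continuous extension; your bounded-oscillation version is slightly weaker, but as you correctly note the ambiguity is bounded and therefore invisible in $\widetilde{\mathrm{QH}}$.
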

\noindent As discussed below, one could also prove the above theorem by applying our methods directly to a modified version of $\cd(S)$ where we allow all continuous (not necessarily smooth) simple closed curves on $S$.

Furthermore, a closer examination of the continuity yields an equicontinuity property for homogeneous quasi-morphisms of bounded defect. Thus an application of Bavard Duality implies that the stable commutator length function is continuous as well.
\begin{theorem}[Continuity of scl]
The stable commutator length function on the group $\Diff_0(S_g)$ is continuous with respect to the $C^0$-topology.
\end{theorem}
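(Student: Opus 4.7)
The plan is to combine Bavard duality with the equicontinuity property for homogeneous quasi-morphisms of bounded defect mentioned just above the theorem.

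First I would use Bavard duality in the normalized form
\[
\scl(g) \;=\; \tfrac{1}{2}\sup_{\phi} |\phi(g)|,
\]
where the supremum runs over homogeneous quasi-morphisms on $\Diff_0(S_g)$ with defect $D(\phi) = 1$. This normalization is legitimate because $\Diff_0(S_g)$ is perfect, so it has no non-trivial homomorphisms, and every homogeneous quasi-morphism with nonzero defect may be rescaled. Continuity of $\scl$ will then follow from a uniform (in $\phi$) continuity statement for $g \mapsto \phi(g)$ over the family of defect-$1$ homogeneous quasi-morphisms.

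The main estimate I would establish is: for any $g, g_0 \in \Diff_0(S_g)$, any integer $n \geq 1$, and any homogeneous $\phi$ with $D(\phi) = 1$,
\[
|\phi(g) - \phi(g_0)| \;\leq\; \frac{|\phi(g^n g_0^{-n})|}{n} + \frac{1}{n}.
\]
This is immediate from the defect inequality applied to $\phi(g^n) = \phi((g^n g_0^{-n}) \cdot g_0^n)$, together with homogeneity $\phi(g^n) = n\phi(g)$ and $\phi(g_0^n) = n\phi(g_0)$, after dividing by $n$. The point is that the naive bound $|\phi(g) - \phi(g_0)| \leq |\phi(g_0^{-1}g)| + 1$ contains an irreducible additive defect term of size $1$, but passing to powers and averaging drives both terms to zero.

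Given $\epsilon > 0$, I would first pick $n$ with $1/n < \epsilon/4$. Applying the equicontinuity property to the family $\{\phi : D(\phi) \leq 1\}$ yields a $C^0$-neighborhood $U$ of $\id$ on which every such $\phi$ satisfies $|\phi| < \epsilon/4$. Since the $C^0$-topology on $\Diff_0(S_g)$ is a topological group topology, the map $g \mapsto g^n g_0^{-n}$ is $C^0$-continuous and sends $g_0$ to $\id$, so there is a $C^0$-neighborhood $V$ of $g_0$ mapped into $U$. For $g \in V$ and any defect-$1$ homogeneous $\phi$, the estimate then gives $|\phi(g) - \phi(g_0)| < \epsilon/2$. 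Taking suprema and using the elementary bound $|\sup_\phi |\phi(g)| - \sup_\phi |\phi(g_0)|| \leq \sup_\phi |\phi(g) - \phi(g_0)|$, Bavard duality yields $|\scl(g) - \scl(g_0)| \leq \epsilon/2$, which proves continuity at $g_0$.

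The main obstacle is the equicontinuity statement for homogeneous quasi-morphisms of bounded defect; this is a genuine strengthening of automatic continuity and is handled separately in the paper (it is the ``closer examination of the continuity'' referred to in the text). Once equicontinuity is in hand the rest is a direct manipulation combining homogeneity of $\phi$ with continuity of multiplication on $\Diff_0(S_g)$. The case $\scl(g_0) = \infty$ fits the same framework: any sequence $\phi_k$ with $D(\phi_k)=1$ and $|\phi_k(g_0)| \to \infty$ also satisfies $|\phi_k(g)| \to \infty$ for $g \in V$ by the same bound, so $\scl$ extends to a continuous function into $[0,\infty]$.
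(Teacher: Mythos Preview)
Your argument is correct and matches the paper's: both reduce via Bavard duality to pointwise equicontinuity of the family of defect-$1$ homogeneous quasi-morphisms, and both obtain that equicontinuity from the power estimate $|\phi(g)-\phi(g_0)|\le (|\phi(g^n g_0^{-n})|+1)/n$ together with a uniform bound on such $\phi$ near the identity. One small streamlining: you invoke full equicontinuity at $\id$ as a black box and then run the power trick again, but the paper only uses the weaker uniform bound $|\phi|\le C_S-1$ on a fixed $C^0$-neighborhood of $\id$ (coming from bounded fragmentation and vanishing on disk-supported diffeomorphisms, Lemma~\ref{lem:bounded_small}) and applies the power trick once; also, since $\Diff_0(S_g)$ is perfect, $\scl$ is everywhere finite and your $\scl(g_0)=\infty$ case is vacuous.
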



\subsection*{Further directions} 
First, we remark that although we construct elements $\varphi\in\Diff_0(S_g)$ 
which have positive stable commutator length \emph{as elements of the group $\Diff_0(S_g)$},
it is not clear if they actually have $\scl>0$ in the full diffeomorphism 
group $\Diff(S_g)$. In any case, the argument given in this paper does not construct
independent elements of $\Diff(S_g)$ (they are even conjugate, by
construction).  In subsequent work, we will develop more robust
geometric tools that enable us to construct elements of $\Diff_0(S_g)$
which are independent even in $\Diff(S_g)$, and consequently, provide
quasi-morphisms on $\Diff(S_g)$ which are unbounded on $\Diff_0(S_g)$.
 
\smallskip Secondly, we emphasise that the tools used to prove
Theorem~\ref{intro:main-theorem} could also \emph{directly} be applied
to groups of homeomorphisms, showing a version of
Theorem~\ref{intro:main-theorem-homeo} for homeomorphism groups. This
is pertinent, since there has been recent progress in understanding
diffeomorphism groups of manifolds (and their subgroups) in the Zimmer
program (see e.g. \cite{BDZ, BFH}) whereas homeomorphism groups have
remained largely mysterious, even in the case of surfaces. The
geometric tools we introduce 
might prove to be useful to study other questions about
homeomorphism groups of surfaces.

\subsection*{Acknowledgements} 
We would like to thank Danny Calegari, Emmanuel Militon, Leonid Polterovich, Pierre Py, and Henry Wilton for interesting and helpful comments and suggestions.

The second and third author would like to thank the conference \emph{Aspects of Non-Positive and Negative Curvature in Group Theory} at the \emph{Centre International De Rencontres Math\'ematiques} where some of the work was carried out. The third author is supported by the EPSRC Fellowship EP/N019644/2.

\section{Background}
\label{sec:bg}

\subsection{Hyperbolic geometry, hyperbolic elements, and axes} \label{sec:hyperbolic-geometry}


%
In what follows, it will be most convenient to define hyperbolicity via the four-point condition. 

\begin{definition} \label{defn:fourpointcondition}
	For points $x,y,w$ of a metric space $(X,d)$ the Gromov product is defined to be 
	\[\langle x,y \rangle_w \coloneq \frac{1}{2}(d(w,x)+d(w,y)-d(x,y)).\] 
	We say that $X$ is $\delta$-hyperbolic if for all $w,x,y,z\in X$ we have that 
	\[ \langle x,z \rangle _w \geq \min\{ \langle x,y \rangle_w, \langle y,z \rangle_w\} - \delta.\]
\end{definition}
We refer the reader to \cite{AlonsoEtAl} for various other definitions
of hyperbolicity (e.g. in terms of slim or thin triangles), and proofs
of their equivalence.

\medskip In this paper, we will view graphs as metric spaces where the
length of each edge is equal to $1$. 
Unless stated otherwise, we also
assume that all graphs in this article are connected, and all actions
on graphs are by simplicial isometries.


Now suppose a group $G$ acts on a graph $\Gamma$. For $g\in G$ we define 
\[|g| \coloneq \lim_{k\to\infty} \frac{1}{k} d(x,g^k x),\] to be the \emph{asymptotic translation length} of $g$.
 We say that $g$ is a \emph{hyperbolic element} if $|g|>0$. If $g$ is a hyperbolic element then any orbit of $g$
 is a \emph{$C$-quasi-axis}, i.e. a $g$-invariant $C$-quasi-geodesic, for some $C$ depending on the orbit. For details on these notions, we refer the reader to  \cite[Chapter~III.H.1]{BridsonH}.

\subsection{Quasi-morphisms and the fragmentation norm}\label{sec:quasi-morphisms}
 A map $\varphi \colon G \to \mathbb{\R}$ is called a \emph{quasi-morphism} (of defect $D(\varphi)$) if
 \[ \sup_{g,g' \in G}|\varphi(gg')-\varphi(g)-\varphi(g')| =
 D(\varphi) < \infty. \] 

 We denote by $\widetilde{\mathrm{QH}}(G)$ the space of unbounded
 quasi-morphisms modulo the subspace of bounded functions (which are
 also quasi-morphisms). Equivalently, $\widetilde{\mathrm{QH}}(G)$ can
 be identified with the space of \emph{homogeneous} quasi-morphisms
 (i.e. those with $\varphi(g^k) =  k\varphi(g)$ for all integers $k\in \Z$ and $g\in G$)

As a quasi-morphism takes uniformly small values on individual commutators (depending
only on the defect and its value on $1$), we obtain:
\begin{lemma}\label{lem:uniform_QM}
  A uniformly perfect group does not admit an unbounded quasi-morphism.
\end{lemma}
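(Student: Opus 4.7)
The plan is to show that any quasi-morphism $\varphi$ on $G$ of defect $D$ takes uniformly bounded values, by first bounding $\varphi$ on a single commutator, then using uniform perfectness to bound $\varphi$ on an arbitrary element.

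First I would establish the basic numerical consequences of the defect inequality. Applying $|\varphi(gh) - \varphi(g) - \varphi(h)| \leq D$ with $g = h = 1$ gives $|\varphi(1)| \leq D$. Then with $h = g^{-1}$ we obtain $|\varphi(g) + \varphi(g^{-1})| \leq D + |\varphi(1)| \leq 2D$, so $\varphi$ is coarsely odd in its argument.

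Next, I would bound $\varphi$ on commutators. Writing $[a,b] = a b a^{-1} b^{-1}$ and applying the defect inequality three times yields
\[
\bigl| \varphi([a,b]) - \varphi(a) - \varphi(b) - \varphi(a^{-1}) - \varphi(b^{-1}) \bigr| \leq 3D.
\]
Combining with $|\varphi(a) + \varphi(a^{-1})| \leq 2D$ and the analogous inequality for $b$ gives $|\varphi([a,b])| \leq 7D$ for every commutator.

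The final step uses the hypothesis of uniform perfectness: there is some $N$ such that every $g \in G$ can be written as a product $c_1 c_2 \cdots c_N$ of at most $N$ commutators (after allowing trivial factors to pad out shorter products). Applying the defect inequality $N-1$ times then yields
\[
|\varphi(g)| \leq \sum_{i=1}^N |\varphi(c_i)| + (N-1) D \leq 7ND + (N-1)D = (8N-1)D,
\]
a bound independent of $g$, so $\varphi$ is bounded. This contradicts the assumption that an unbounded quasi-morphism exists.

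The whole argument is routine bookkeeping with the defect inequality; the only place one needs to be mildly careful is in step one, keeping track of how many applications of the defect inequality are used so that one obtains a bound on $|\varphi([a,b])|$ that depends only on $D$ and not on $a,b$. There is no serious obstacle.
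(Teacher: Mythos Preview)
Your proof is correct and follows exactly the approach the paper indicates: the paper merely remarks, just before stating the lemma, that a quasi-morphism takes uniformly small values on individual commutators (depending only on the defect and its value on $1$), and leaves the lemma without further proof. Your argument is a careful, explicit unpacking of that one-line sketch, with the same structure and the same ingredients.
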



\begin{definition}[Fragmentation Norm]
For a closed manifold $M$ of dimension $n$, it is well known that any diffeomorphism $f \in \Diff_0(M)$ can be written as a product of diffeomorphisms supported on balls (see\ eg.\ \cite{Mann}). Such a factorisation is called a {\em fragmentation}. We define the fragmentation norm
$$\| f\|_{Frag} = \min \{N \ | \ f = h_1 \cdots h_N \ ,  \ \mathrm{supp}(h_i) \subset U_i \cong B^n \}.$$
\end{definition}
In \cite{BIP} it is shown that the fragmentation norm is universal in the sense that it coarsely bounds any conjugacy-invariant norm on $\Diff_0(M)$.
Let $G$ be a perfect group (for example $\Diff_0(M)$ where $M$ is a closed manifold). Then the {\em commutator length} is defined to be
$$\mathrm{cl}(g)  = \min \{N \ | \ g = [f_1,h_1] \cdots [f_N,h_N] \},$$
where $[f,h]$ denotes the commutator of two elements in $G$. As commutator length is a
conjugacy-invariant norm, we get:
\begin{corollary}\label{cor:frag_unbounded}
  If  $\Diff_0(M)$ is not uniformly perfect, then the fragmentation norm is
  unbounded.
\end{corollary}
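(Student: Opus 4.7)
The plan is to chain together the three ingredients already assembled in the paragraph immediately before the statement: the characterisation of ``uniformly perfect'' in terms of commutator length, the fact that commutator length is a conjugacy-invariant norm, and the universality property of the fragmentation norm proved in \cite{BIP}. None of these require further work, so the corollary is essentially a one-line deduction; the only task is to arrange the implications in the right order.

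First I would unpack the hypothesis. By the definition of uniform perfectness, $\Diff_0(M)$ fails to be uniformly perfect precisely when there is no uniform bound $N$ such that every element can be written as a product of at most $N$ commutators; equivalently, the commutator length function $\mathrm{cl}\colon\Diff_0(M)\to\mathbb{N}$ is unbounded. So one can choose a sequence $g_n\in\Diff_0(M)$ with $\mathrm{cl}(g_n)\to\infty$.

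Next I would invoke the universality result of Burago--Ivanov--Polterovich \cite{BIP}, recalled just above the statement: the fragmentation norm $\|\cdot\|_{Frag}$ coarsely bounds any conjugacy-invariant norm on $\Diff_0(M)$. That is, for every conjugacy-invariant norm $\nu$ there exist constants $A,B>0$ with $\nu(g)\leq A\|g\|_{Frag}+B$ for all $g\in \Diff_0(M)$. Since commutator length is manifestly invariant under conjugation (conjugation permutes the factors in a commutator factorisation), it is such a norm, and therefore
\[\mathrm{cl}(g)\leq A\|g\|_{Frag}+B\]
for uniform constants $A,B$. Applying this to the sequence $g_n$ forces $\|g_n\|_{Frag}\to\infty$, proving that the fragmentation norm is unbounded.

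There is no real obstacle here; the only thing to verify with any care is that ``not uniformly perfect'' really is the negation of ``commutator length is uniformly bounded''—which is immediate from the definitions—and that the BIP universality statement applies in the stated generality on a closed manifold, which it does. The corollary is then a direct consequence, and the two bullet points in the conclusion of the main Corollary~\ref{cor:frag} from the introduction follow at once by combining this with Lemma~\ref{lem:uniform_QM} and Theorem~\ref{intro:main-theorem}.
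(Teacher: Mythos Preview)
Your argument is correct and is exactly the deduction the paper intends: commutator length is a conjugacy-invariant norm, and the universality of the fragmentation norm from \cite{BIP} then forces $\|\cdot\|_{Frag}$ to be unbounded whenever $\mathrm{cl}$ is. This is precisely the one-line reasoning indicated in the paper by the sentence ``As commutator length is a conjugacy-invariant norm, we get:'' preceding the corollary.
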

\noindent In view of these results, one can then deduce the fact that the fragmentation norm on $\Diff_0(S)$ for any closed surface of genus at least one (cf.\ Corollary~\ref{cor:frag})  once we have proven the existence of unbounded quasi-morphisms  as in Theorem~\ref{intro:main-theorem}. 

\subsection{Actions on hyperbolic spaces and counting quasi-morphisms}
Consider an isometric action of a group $G$ on a $\delta$-hyperbolic
graph considered with the path metric $(X,d_X)$. Epstein and Fujiwara \cite{EpsteinFujiwara, Fuji}
described certain ``counting quasi-morphisms". These generalise the
counting quasi-morphisms of Brooks \cite{Brooks} for free groups,
whereby one counts non-overlapping copies of some word. While Fujiwara
assumes in \cite{Fuji} that the group action is properly
discontinuous, this is not required for the results we use (compare
also \cite{BF} for a discussion of this point). We follow the notation of \cite{BF}.

\begin{definition}[{\cite{BF}}] \label{def:thicksim} Let $g_1,g_2\in
  G$ be two hyperbolic elements with $(K,L)$-quasi-axes $A_1$ and $A_2$.
  We write $g_1\thicksim g_2$ if for any arbitrarily long
  subsegment $J$ in $A_1$ there is an element $h\in G$ such that $hJ$
  is within the $B(K,L,\delta)$-neighborhood of $A_2$. 
\end{definition}
\noindent  Let us briefly discuss the definition above and the definition of $B=B(K,L,\delta)$. If we instead fix a large constant $B'$ (possibly depending on $g_1$ and $g_2$) and insist there are arbitrarily long subsegments (of the $(K,L)$-quasi-geodesics) that have Hausdorff distance at most $B'$, then by general properties of quasi-geodesics in Gromov hyperbolic spaces, there are arbitrarily long subsegments that have Hausdorff distance at most $B=B(K,L,\delta)$. It is this observation that enables one to show that  $\thicksim$ is an equivalence relation (see \cite{BF} for details).

When $g_1 \nsim g_2$ we also say that $g_1$ and $g_2$ are \textit{independent}. We then have the following general criterion for the existence of unbounded quasi-morphisms given an isometric action on a Gromov hyperbolic space.

\begin{theorem}[{\cite[Theorem 1 and Proposition 2]{BF}}]\label{thm:BF_infinite}
Suppose that $g_1, g_2 \in G$ act hyperbolically on $X$ and that $g_1 \not \sim g_2$. Then the space of unbounded, homogeneous quasi-morphisms on $G$ is infinite dimensional.
\end{theorem}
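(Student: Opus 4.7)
The plan is to construct Brooks--Epstein--Fujiwara counting quasi-morphisms on $G$ and exploit the independence hypothesis $g_1 \nsim g_2$ to produce infinitely many linearly independent ones.

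First I would set up the counting construction. Fix a basepoint $x_0 \in X$. For any ``word'' $w$, i.e.\ a sufficiently long finite quasi-geodesic segment in $X$, define $c_w(g)$ to be the maximal number of pairwise essentially disjoint $G$-translates of $w$ that can be placed along any geodesic from $x_0$ to $g\cdot x_0$, where ``essentially disjoint'' means the overlap between any two is small compared to the length of $w$. Then set $\varphi_w = c_w - c_{\overline{w}}$, where $\overline{w}$ is $w$ traversed in the opposite direction. I would verify that $\varphi_w$ is a quasi-morphism by the standard thin-triangle argument: for $g,g'\in G$, compare geodesics $[x_0,gx_0]$, $[gx_0,gg'x_0]$, $[x_0,gg'x_0]$; by $\delta$-thinness any copy of $w$ in the third either lies close to a copy in one of the other two or is confined to a bounded region near the centre of the triangle, and the orientation-cancellation between $c_w$ and $c_{\overline{w}}$ handles the remaining ambiguity, yielding a defect bound depending only on $\delta$, $B(K,L,\delta)$, and $|w|$.

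Next I would produce two linearly independent quasi-morphisms as a warm-up. Choose long subsegments $w_i$ of $(K,L)$-quasi-axes $A_i$ of $g_i$ for $i=1,2$. The orbit $\{g_1^k x_0\}$ is itself a quasi-axis along which roughly $n/|w_1|$ essentially disjoint $G$-translates of $w_1$ accumulate, so $|\varphi_{w_1}(g_1^n)| \to \infty$ linearly in $n$. By $g_1 \nsim g_2$, no arbitrarily long $G$-translate of $w_1$ lies within the $B(K,L,\delta)$-neighbourhood of $A_2$, and hence $\varphi_{w_1}(g_2^n)$ is bounded. By symmetry $\varphi_{w_2}$ is unbounded on powers of $g_2$ and bounded on powers of $g_1$, so after homogenisation we obtain two linearly independent unbounded homogeneous quasi-morphisms.

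To upgrade to an infinite-dimensional space, I would consider the sequence of hyperbolic elements $h_n = g_1 g_2^n$ (hyperbolicity follows from a ping-pong argument using $g_1\nsim g_2$) and let $W_n$ be a long segment of a quasi-axis of $h_n$. These words are combinatorially distinct because the $g_2$-block appearing along a quasi-axis of $h_n$ has length proportional to $n$ and cannot be matched by the $g_2$-block along the quasi-axis of $h_m$ for $m\neq n$; independence controls how arbitrary $G$-translates of $W_n$ can overlap quasi-axes of $h_m$. Evaluating $\varphi_{W_n}$ on large powers $h_m^k$ then gives a value linear in $k$ when $m=n$ and bounded otherwise, and an upper-triangular argument on this matrix of values yields linear independence of $\{\varphi_{W_n}\}$.

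The main obstacle I anticipate is the matching argument in the final step: without WPD or proper discontinuity, one cannot appeal to local finiteness of $G$-translates of $W_n$ to rule out spurious long matchings, so the control must be extracted purely from the independence hypothesis and the stability of quasi-geodesics in a $\delta$-hyperbolic space. This careful ping-pong analysis, combined with the observation recalled after Definition~\ref{def:thicksim} about replacing a hypothetical constant $B'$ by the universal $B(K,L,\delta)$, is essentially the content of \cite[Proposition 2]{BF}, whose strategy I would follow.
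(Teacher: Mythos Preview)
The paper does not give its own proof of this theorem: it is quoted verbatim from \cite[Theorem~1 and Proposition~2]{BF}, with only the one-line remark that Bestvina--Fujiwara construct an explicit word $w$ in $g_1,g_2$ and a quasi-morphism unbounded on $\langle w\rangle$. So there is nothing in the paper to compare your argument against beyond that citation.

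Your sketch is a faithful outline of the Bestvina--Fujiwara strategy (counting quasi-morphisms \`a la Brooks--Epstein--Fujiwara, then a triangular linear-independence argument on words built from $g_1,g_2$), and you correctly flag the one genuinely delicate point: without WPD or proper discontinuity, the control over long $G$-translates of $W_n$ along the quasi-axis of $h_m$ must come solely from the independence hypothesis and Morse stability. Two places where your outline is a bit loose compared with \cite{BF}: first, the ``by symmetry'' in your warm-up tacitly uses that $\thicksim$ is an equivalence relation (which the paper notes just after Definition~\ref{def:thicksim}); second, the hyperbolicity of $h_n=g_1g_2^n$ and the non-matching between $W_n$ and $W_m$ typically require passing to suitable high powers of $g_1,g_2$ first (to set up a genuine Schottky/ping-pong configuration), not the raw elements. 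These are exactly the refinements carried out in \cite[Proposition~2]{BF}, so your plan to follow that source is the right call.
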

In particular, Bestvina and Fujiwara construct an explicit word $w$ in $g_1$ and $g_2$, and a quasi-morphism
which is unbounded on the cyclic subgroup generated by $w$.

\subsection{Curve graphs and pseudo-Anosovs}\label{sec:curve-graphs}In this section we collect some basic results on (usual) curve graphs. 

Throughout we denote by $\mathcal{C}(S)$ the \emph{curve graph} of the (finite-type) surface $S$. That is, vertices of  $\mathcal{C}(S)$ correspond to isotopy classes of essential (i.e. not nullhomotopic) and non-peripheral (i.e. not homotopic to a puncture) simple
closed curves on $S$. Edges join distinct vertices if they admit disjoint representatives. In the case of the torus or the once-punctured torus we also have edges when the classes admit representatives intersecting at most once.

Let $n\geq 0$ and $S$ be the surface of genus $g\geq 1$ with $n$ punctures. We denote by $\mathcal{C}^s(S)$ the \emph{surviving curve graph} of the surface $S$, which is the subgraph spanned by all vertices of curves that continue to be essential even in the closed surface obtained by filling in the punctures of $S$. When $g=1$ the vertices of $\mathcal{C}^s(S)$ are precisely the curves with one complementary component (i.e. \textit{non-separating}) but we also include edges between distinct vertices if they admit representatives that intersect at most once. 
Using standard surgery techniques, it is not hard to show that the graphs above are path-connected. 

As we frequently need to use both actual curves and isotopy
classes we adopt the following notational convention.
\begin{convention*}We use Greek letters for actual simple closed curves
  on $S$ and Latin letters for isotopy classes. Furthermore all curves are smooth.\end{convention*}

For two curves $a$ and $b$ we may define the \emph{geometric intersection number} $i(a,b)$ of $a$ and $b$ to be the minimal possible value of $|\alpha\cap\beta|$ where $\alpha$ and $\beta$ are transverse, and, are representatives of the isotopy classes $a$ and $b$ respectively. Therefore $i(a,b)=0$ if and only if $a$ and $b$ are adjacent vertices.

When $\alpha$ is an essential and non-peripheral simple closed curve on $S$ and $P\subset S$ is a set of points
disjoint from $\alpha$ we denote by $[\alpha]_{S-P}$ the isotopy
class defined by $\alpha$ on $S-P$. 

	For a pair of transverse curves $\alpha$ and $\beta$ disjoint from a finite subset $P\subset S$, we say
	that $\alpha$ and $\beta$ are in \emph{minimal position} in $S-P$ if
	$|\alpha\cap\beta|$ is minimal among the representatives of $[\alpha]_{S-P}$ and $[\beta]_{S-P}$.

        Minimal position can be tested in the following way. A \emph{bigon}
	of $\alpha$ and $\beta$ in $S-P$
	is a complementary region of $\alpha\cup\beta$ in $S-P$ that is homeomorphic to a disk and bounds
	exactly one subarc of $\alpha$ and one subarc of $\beta$.

\begin{lemma}[{Bigon Criterion \cite[Proposition~1.7]{Primer}}] \label{lem:bigoncriterion} 
	For transverse simple closed curves $\alpha$ and $\beta$ we have that $\alpha$ and $\beta$ are in minimal position in $S-P$
	if and only if there are no bigons of $\alpha$ and $\beta$ in $S-P$.
\end{lemma}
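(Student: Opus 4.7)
The proof splits into two directions. The forward direction, that a bigon implies non-minimal position, is straightforward: given a bigon $D \subset S-P$ bounded by subarcs $a\subset\alpha$ and $b\subset\beta$, I would perform an ambient isotopy of $S-P$ supported in a small neighborhood of $\overline{D}$ which pushes $a$ across $D$ and just past $b$. Since $D$ lies in $S-P$ it contains no puncture, so the isotopy is valid in $S-P$ and the resulting representative of $[\alpha]_{S-P}$ meets $\beta$ in exactly $|\alpha\cap\beta|-2$ transverse points, contradicting minimality.

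The substance is the reverse direction. Assuming no bigons exist in $S-P$, the plan is to pass to the universal cover $\pi\colon \widetilde{S-P}\to S-P$ and prove the key claim that any two lifts $\widetilde{\alpha}$ and $\widetilde{\beta}$ of $\alpha$ and $\beta$ intersect in at most one point. For surfaces of negative Euler characteristic the cover is $\mathbb{H}^2$; the torus and once-punctured torus cases run on the same template using $\mathbb{R}^2$ with straight-line representatives. Supposing some pair of lifts $\widetilde{\alpha}, \widetilde{\beta}$ meet in at least two points, I would pick subarcs of each bounding a disk $\widetilde{D}$ in the universal cover. Because $\alpha$ and $\beta$ are simple, distinct lifts of $\alpha$ (respectively of $\beta$) are disjoint, so any arc of another lift of $\alpha$ meeting $\widetilde{D}$ must have both endpoints on the $\widetilde{\beta}$-side of $\partial \widetilde{D}$, and vice versa. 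Each such interior arc cuts off a strictly smaller disk bounded by one arc from a lift of $\alpha$ and one from a lift of $\beta$, so iterating I may assume $\widetilde{D}$ is genuinely innermost, with no other lifts meeting its interior. A deck-transformation argument then shows that $\pi$ restricted to the interior of $\widetilde{D}$ is injective, producing an embedded bigon in $S-P$ and a contradiction.

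Given the key claim, minimal position follows by a standard counting argument. For any transverse representative $\alpha'$ of $[\alpha]_{S-P}$, each lift $\widetilde{\alpha}$ of $\alpha$ corresponds under the isotopy to a lift $\widetilde{\alpha'}$ sharing the same endpoints at infinity. For every lift $\widetilde{\beta}$ crossing $\widetilde{\alpha}$, the closure of $\widetilde{\beta}$ in the circle compactification separates the two ideal endpoints of $\widetilde{\alpha}$, so $\widetilde{\alpha'}$ is forced to cross $\widetilde{\beta}$ as well. A $\pi_1(S-P)$-equivariant count of intersection points then yields $|\alpha\cap\beta|\leq |\alpha'\cap\beta|$.

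The main obstacle is the innermost-bigon step in the universal cover: one must carefully manage the combinatorics of other lifts intersecting the candidate disk, and then verify that the genuinely innermost disk projects to an \emph{embedded} (not merely immersed) bigon downstairs. The low-complexity cases need a brief separate mention but follow the same blueprint, and everything else is bookkeeping.
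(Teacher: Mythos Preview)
The paper does not give its own proof of this lemma; it is stated with a citation to \cite[Proposition~1.7]{Primer} (Farb--Margalit) and used as a black box. Your sketch is precisely the argument given in that reference: the bigon-removal isotopy for one direction, and for the converse the passage to the universal cover, the innermost-disk argument to show any two lifts meet at most once, followed by the endpoint-at-infinity count to conclude minimality. The outline is correct and standard.

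One small slip: the once-punctured torus has $\chi=-1$ and universal cover $\mathbb{H}^2$, not $\mathbb{R}^2$; only the closed torus (the case $P=\emptyset$, $g=1$) requires the Euclidean model. This does not affect the argument, since the once-punctured torus is handled by your hyperbolic case without modification.
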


Finally we need two results on \emph{pseudo-Anosov mapping classes}.
We refer the reader to \cite{Primer} for a discussion
of some basic properties of pseudo-Anosov mapping classes and further references. 
The first result we require states that a pseudo-Anosov mapping class
acts on $\mathcal{C}(S)$ as a hyperbolic isometry \cite{MM1}.

The pseudo-Anosov maps we use are built as \emph{point-pushing maps}.
Recall that if $S$ is a surface of genus $g\geq 2$, and $p\in S$ is
a point, then there is a \emph{Birman exact sequence}
\[ 1 \to \pi_1(S,p) \to \mathrm{Mcg}(S-p) \to \mathrm{Mcg}(S) \to 1 ,\]
and elements of the kernel are called  \emph{point-pushing
  maps}.  We refer the reader to \cite[Section~4.2]{Primer} for details. 

Many of these point-pushing maps are in fact pseudo-Anosov. By a result of
Kra \cite{Kra} the element in $\mathrm{Mcg}(S-p)$ corresponding to $\gamma \in  \pi_1(S,p)$
is pseudo-Anosov precisely when $\gamma$ is \emph{filling} i.e. there is no non-trivial homotopy class $\alpha$ such that, after homotopies of both curves, $\alpha$ and $\gamma$ are disjoint. This is the second result about pseudo-Anosovs that we require.

\section{A new curve graph}
\label{sec:hyperbolicity}

Throughout this section we let $S$ be a closed oriented surface of genus $g \geq 1$.

\begin{definition}
 Let $\cd(S)$ be the graph whose vertices correspond to
    essential simple closed curves in $S$. Two such vertices are
    joined by an edge precisely when the corresponding curves are
    disjoint.
  We denote by $d^\dagger$ the distance in $\cd(S)$.

When $S$ is the $2$-torus we change the definition of the edges of $\cd(S)$: two vertices are joined by an edge precisely when the corresponding curves intersect at most once.
\end{definition}

It is not hard to check that $\cd(S)$ is path-connected: given the connectedness of the ordinary curve graph it suffices to consider the case when the curves are isotopic and this reduces to the case that the curves are arbitrarily close and are evidently disjoint from a common push-off. 

\begin{remark}
  For this paper, we assume that all curves in the definition of
  $\cd(S)$ are smoothly embedded. Thus $\mathrm{Homeo}(S)$
  does not act on this graph. However all our arguments would remain
  valid for $\mathrm{Homeo}(S)$
  if we define the graph $\cd(S)$ using topologically
  embedded curves instead. In fact the resulting graphs are quasi-isometric. As this article is
  mainly concerned with the case of diffeomorphism groups we will not
  elaborate on this point further.
\end{remark}


The natural action of $\Diff(S)$ on the set of curves induces an isometric action on $\cd(S)$ and by restriction we have a natural action of $\Diff_0(S)$. We wish to show that $\cd(S)$ is hyperbolic. Though this theorem can be proved in several 
	ways, for brevity and convenience we use the four-point condition, see Definition~\ref{defn:fourpointcondition}. 

	We require the following result due to Alexander~Rasmussen \cite{Rasmussen}. Recall the definition of $\mathcal{C}^s(S)$ from Section~\ref{sec:curve-graphs}.

\begin{theorem}\label{thm:uniform-hyperbolicity}
	There is a number $\delta>0$ such that  $\mathcal{C}^s(\Sigma)$ is $\delta$-hyperbolic whenever $\Sigma$
	 is a finite-type surface with positive genus.
\end{theorem}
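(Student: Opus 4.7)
My plan is to establish the uniform hyperbolicity of $\mathcal{C}^s(\Sigma)$ by adapting the combinatorial unicorn/bicorn-path method of \cite{Unicorns, Bicorns} to the surviving-curve setting, thereby directly verifying the four-point condition of Definition~\ref{defn:fourpointcondition} with a constant $\delta$ depending only on the combinatorial estimates, and not on $\Sigma$.

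The first step is to define, for each pair of vertices $\alpha, \beta \in \mathcal{C}^s(\Sigma)$ placed in minimal position on $\Sigma$, a preferred family of ``unicorn'' (or bicorn) paths from $\alpha$ to $\beta$ whose intermediate vertices are simple closed curves built by concatenating a subarc of $\alpha$ with a subarc of $\beta$ cut at a suitable pair of points of $\alpha\cap\beta$. The crucial technical lemma to verify is that, with suitable choices of cutting points, each unicorn curve is not only essential in $\Sigma$ but also remains essential after filling in the punctures, so that it is genuinely a vertex of $\mathcal{C}^s(\Sigma)$. This is where the positive-genus hypothesis enters: a unicorn curve failing to be surviving would bound a disk-with-punctures in $\bar{\Sigma}$, forcing all of the topology of $\bar{\Sigma}$ to lie on one side, and a case analysis of $\alpha\cup\beta$ relative to this subdisk rules out (or corrects) such configurations.

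The next step is to verify the two combinatorial estimates that drive the HPW-style hyperbolicity proof in this setting. First, two unicorn curves coming from adjacent choices of cutting data differ by a surgery that keeps them at distance $\le 1$ in $\mathcal{C}^s(\Sigma)$, so that the unicorn curves actually assemble into paths of uniformly bounded step size. Second, a nesting lemma: any unicorn path joining two unicorns on an $\alpha\beta$-unicorn path is uniformly close to the original $\alpha\beta$-unicorn path. These two estimates together allow one to build triangles of unicorn paths whose side slimness is controlled by an absolute constant, and the four-point condition with uniform $\delta$ then follows from a standard argument, using the uniform hyperbolicity of $\mathcal{C}(\bar{\Sigma})$ only as a black box for the bounded-combinatorics input.

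The main obstacle I anticipate is the surviving property in the first step. Concatenating subarcs of two non-peripheral, essential-in-$\bar\Sigma$ curves can genuinely produce a curve that bounds a disk with punctures in $\bar{\Sigma}$ and hence fails to be surviving, so the naive unicorn construction does not stay in $\mathcal{C}^s(\Sigma)$. To handle this I would either enlarge the preferred family slightly to always contain a surviving representative, or perform a uniformly bounded correction by replacing a bad unicorn with an adjacent non-separating curve in the positive-genus component of the complement (which always exists by the positive-genus hypothesis, and is automatically surviving). The additional work is to show that this correction is coarsely well-defined, so that the combinatorial estimates in the second step go through without a loss in the constants that would depend on $\Sigma$.
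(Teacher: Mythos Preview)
Your plan is a reasonable outline of a direct bicorn-style argument, but it takes a much longer route than the paper. The paper's proof is a two-line reduction: it cites Rasmussen \cite{Rasmussen} for the uniform hyperbolicity of the \emph{non-separating} curve graph $\mathcal{NC}(\Sigma)$, and then observes that when $g\ge 2$ the inclusion $\mathcal{NC}(\Sigma)\hookrightarrow\mathcal{C}^s(\Sigma)$ is an isometric embedding with $1$-dense image (and for $g=1$ the two graphs coincide), so the uniform hyperbolicity constant transfers immediately. What you are proposing is essentially to reprove Rasmussen's theorem from scratch, adapted to $\mathcal{C}^s$ rather than $\mathcal{NC}$; indeed Rasmussen's own argument is a bicorn argument in the spirit of \cite{Bicorns}, and the obstacle you flag (bicorns of surviving curves need not survive) is exactly the issue he has to address for non-separating curves. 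Your proposed fix --- replacing a bad bicorn by a nearby non-separating curve in the positive-genus side --- is the right idea, but carrying it out so that the slimness and nesting estimates remain uniform is the substance of Rasmussen's paper, not a routine verification. So your approach buys self-containment at the cost of redoing a nontrivial published argument; the paper's approach buys brevity by citing that argument and adding only the elementary $1$-density observation. One minor point: your remark about using hyperbolicity of $\mathcal{C}(\bar\Sigma)$ ``as a black box for the bounded-combinatorics input'' is out of place --- the unicorn/bicorn machinery produces hyperbolicity directly from the combinatorial slimness, with no appeal to hyperbolicity of any other curve graph.
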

\begin{proof}
The main result of \cite{Rasmussen} states that there exists $\delta'$ such that $\nc(\Sigma)$ is $\delta'$-hyperbolic whenever $\Sigma$ is a finite-type surface with positive genus. Here $\mathcal{NC}(\Sigma)$ is the subgraph of $\mathcal{C}^s(\Sigma)$ consisting only of non-separating curves. When the genus is $1$ these graphs coincide. When the genus is at least $2$ it is not hard to show that this subgraph is $1$-dense and the inclusion map is an isometric embedding, therefore $\mathcal{C}^s(\Sigma)$ is also hyperbolic with a universal hyperbolicity constant $\delta$. \end{proof}


	We wish to ``approximate'' $\cd(S)$ with (usual) surviving curve graphs of finite-type surfaces $\mathcal{C}^s(S-P)$.
	For $\alpha\in\cd(S)$ disjoint from a finite subset $P\subset S$ 
	we remind the reader that we write $[\alpha]_{S-P}$ for the 
	isotopy class of $\alpha$ in $S-P$.
	The following Lemma~\ref{lem:realise-full} is key. Recall the notion of minimal position from Section~\ref{sec:curve-graphs}.

\begin{lemma} \label{lem:realise-full}
	Suppose that $\alpha,\beta\in \cd(S)$ are transverse, and that $\alpha$ and $\beta$ are in minimal 
	position in $S-P$
	where $P\subset S$ is finite and disjoint from $\alpha$ and $\beta$. Then 
	\[ d_{\mathcal{C}^s(S-P)}([\alpha]_{S-P},[\beta]_{S-P}) = d^\dagger (\alpha,\beta).\]\end{lemma}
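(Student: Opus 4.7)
The plan is to prove the two inequalities separately: a path in $\cd(S)$ gives a path in $\mathcal{C}^s(S-P)$ (upper bound), and the reverse (lower bound), using the minimal position assumption crucially in the latter.

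\textbf{Upper bound} $d_{\mathcal{C}^s(S-P)}([\alpha]_{S-P},[\beta]_{S-P}) \le d^\dagger(\alpha,\beta)$. Starting from a geodesic $\alpha = \gamma_0, \gamma_1, \dots, \gamma_n = \beta$ in $\cd(S)$, the only issue is that intermediate curves may meet $P$. I would perturb each $\gamma_i$ ($1 \le i \le n-1$) inside a small tubular neighborhood $N_i \cong S^1 \times (-\varepsilon,\varepsilon)$ chosen disjoint from $\gamma_{i\pm 1}$ to a parallel curve $\gamma_i'$ missing the finite set $P \cap N_i$. Since $S$ is closed, any essential curve in $S$ disjoint from $P$ is automatically non-peripheral, essential and surviving in $S-P$; hence each $[\gamma_i']_{S-P}$ is a vertex of $\mathcal{C}^s(S-P)$, and consecutive classes admit disjoint representatives. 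This produces a walk of length $\le n$.

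\textbf{Lower bound} $d^\dagger(\alpha,\beta) \le d_{\mathcal{C}^s(S-P)}([\alpha]_{S-P},[\beta]_{S-P})$. I would induct on $m := d_{\mathcal{C}^s(S-P)}([\alpha]_{S-P},[\beta]_{S-P})$. For $m \le 1$ the geometric intersection number in $S-P$ is zero, and the bigon criterion (Lemma~\ref{lem:bigoncriterion}) combined with minimal position gives $|\alpha \cap \beta| = 0$, so $d^\dagger(\alpha,\beta) \le 1$. For $m \ge 2$, fix a geodesic $a_0 = [\alpha]_{S-P}, a_1, \dots, a_m = [\beta]_{S-P}$ in $\mathcal{C}^s(S-P)$. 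The inductive step rests on the following key claim.

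\textbf{Key Claim.} There is a representative $\gamma$ of $a_1$ in $S-P$ with $\gamma \cap \alpha = \emptyset$ and $\gamma, \beta$ in minimal position in $S-P$. To prove this, start with any representative $\gamma_0$ of $a_1$ disjoint from $\alpha$ (which exists because $i_{S-P}(a_0,a_1)=0$) and transverse to $\beta$. If $\gamma_0$ and $\beta$ bound a bigon $B$ in $S-P$, then $B$ is disjoint from $\alpha$: since $\alpha \cap \gamma_0 = \emptyset$, any arc of $\alpha$ inside $B$ has both endpoints on $\beta \cap \partial B$, and an innermost such arc would bound a bigon of $\alpha$ and $\beta$ inside $B \subset S-P$, contradicting that $\alpha$ and $\beta$ are in minimal position in $S-P$. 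A surgery supported in a small neighborhood of $B$ then reduces $|\gamma_0 \cap \beta|$ by two while preserving $\gamma_0 \cap \alpha = \emptyset$. Iterating yields the desired $\gamma$. Given the claim, the inductive hypothesis applied to $(\gamma,\beta)$ along the geodesic $a_1,\dots,a_m$ (of length $m-1$) provides a path $\gamma = \gamma_1, \dots, \gamma_m = \beta$ in $\cd(S)$; prepending $\alpha$ (disjoint from $\gamma$) gives a path of length $m$, finishing the induction.

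\textbf{Main obstacle.} The technical heart is the Key Claim, which is a simultaneous-minimal-position statement for three curves with two of them held fixed. The minimal position hypothesis on $\alpha$ and $\beta$ is used precisely here, to prevent bigons of $\gamma_0$ with $\beta$ from hiding bigons of $\alpha$ with $\beta$; without it, iterated bigon surgery could force $\gamma_0$ to acquire intersections with $\alpha$, and the induction would break down.
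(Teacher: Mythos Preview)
Your proof is correct and uses the same ideas as the paper: perturbing a $\cd(S)$-geodesic off $P$ for one inequality (this is the paper's Lemma~\ref{lem:moveoffP}) and innermost-bigon surgery for the other. The only difference is packaging: instead of inducting on $m$, the paper states your Key Claim in the general form of Lemma~\ref{lem:bigonlessreps} (any curves can be isotoped rel $P$ into minimal position with a fixed collection already pairwise in minimal position) and applies it once to realize the entire $\mathcal{C}^s(S-P)$-geodesic by actual curves while holding $\alpha$ and $\beta$ fixed.
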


We emphasise that $\alpha$ and $\beta$ in this lemma need not be in minimal position when seen as
curves on $S$, but only when seen as curves on $S-P$ (i.e. bigons between $\alpha$ and $\beta$ in $S$ are allowed provided they contain at least one point of $P$, in which case they are not bigons in $S-P$).

	The proof of Lemma~\ref{lem:realise-full} is a corollary of the following two lemmas, which are stated 
	in a broader context.

\begin{lemma}\label{lem:bigonlessreps}
	Suppose that $\alpha_1, \ldots, \alpha_n$ are curves that are pairwise in minimal position
	in $S-P$. Let $\beta_1, \ldots, \beta_m$ be curves that are disjoint from $P$. Then the $\beta_i$
	can be isotoped in $S-P$ such that $\alpha_1, \ldots, \alpha_n, \beta_1, \ldots, \beta_m$
	 are pairwise in minimal position in $S-P$. 
\end{lemma}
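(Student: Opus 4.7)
My plan is to use the bigon criterion (Lemma~\ref{lem:bigoncriterion}) and argue by iteratively removing bigons via innermost-disk isotopies of the $\beta_j$'s. The set-up is to start with arbitrary smooth representatives (transverse to each other and to the $\alpha_i$'s, with no triple intersections), and measure complexity by the total number of intersection points
\[ N = \sum_{i<j} |\alpha_i \cap \alpha_j| + \sum_{i,j} |\alpha_i \cap \beta_j| + \sum_{i<j} |\beta_i \cap \beta_j|. \]
If some pair fails to be in minimal position in $S-P$, then by the bigon criterion there exists a bigon in $S-P$ bounded by arcs of two of our curves. Crucially, since the $\alpha_i$'s are already pairwise in minimal position in $S-P$, any such bigon must have at least one of its two boundary arcs lying on some $\beta_j$.

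Next I would choose an \emph{innermost} bigon $D$, meaning that the interior of $D$ meets no other curve from the family. Such a $D$ exists by the standard argument: among all bigons, pick one whose interior contains the fewest intersection arcs with other curves; if this count were positive, any arc $\gamma$ of some third curve crossing the interior of $D$ would enter and exit through $\partial D$, and together with a subarc of $\partial D$ would cut off either a strictly smaller bigon (when the endpoints of $\gamma \cap D$ lie on the same side of $\partial D$) or yield a bigon between $\gamma$ and one of $\alpha,\beta$ that is strictly smaller — contradicting minimality. Since $D$ is a bigon in $S-P$, its interior is automatically disjoint from $P$.

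With an innermost bigon $D$ in hand whose boundary involves some $\beta_j$, I perform the standard isotopy pushing the $\beta_j$-arc of $\partial D$ across $D$ and just past the $\alpha_i$ (or $\beta_k$) arc of $\partial D$. Because $D$'s interior is disjoint from every other curve and from $P$, this isotopy can be carried out in a small neighbourhood of $\overline{D}$ inside $S-P$, it moves only $\beta_j$, and it removes exactly the two intersection points of $\partial D$ without creating any new intersections with any curve. Hence $N$ strictly decreases, while the $\alpha_i$'s are untouched so they remain pairwise in minimal position in $S-P$. Iterating yields, after finitely many steps, a configuration with no bigons between any pair, and the bigon criterion then gives pairwise minimal position in $S-P$, as required.

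The main obstacle in executing this plan is ensuring the existence of the innermost bigon and verifying that the chosen isotopy genuinely decreases $N$ without disturbing the minimal position already enjoyed by the $\alpha_i$'s; this is handled by the innermost-disk argument above, which confines the isotopy's support to a neighbourhood of $\overline{D}$ in $S-P$ and leaves the $\alpha_i$'s setwise unchanged.
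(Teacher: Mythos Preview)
Your overall strategy---reduce to transverse curves, repeatedly remove bigons by pushing a $\beta_j$ across, and terminate by a complexity count---is the same as the paper's. The paper, however, first reduces to $m=1$ by induction, and then only looks for a bigon that is innermost \emph{among bigons between $\beta_1$ and some $\alpha_k$}; it does not claim the bigon has interior disjoint from all curves. This matters, because your stronger claim is not true in general.

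The gap is in your ``innermost bigon'' step. You assert that if an arc $\gamma$ of a third curve crosses the bigon $D$, then $\gamma$ together with a subarc of $\partial D$ cuts off a strictly smaller bigon. This is correct when both endpoints of $\gamma$ lie on the same side of $\partial D$, but when $\gamma$ runs from the $a$-side to the $b$-side it cuts $D$ into two \emph{triangles}, not bigons, and your minimality argument stalls. A concrete configuration showing that no empty-interior bigon need exist: on the torus take $\alpha_1$ a horizontal curve, $\alpha_2,\alpha_3$ two disjoint vertical curves, and $\beta_1$ a horizontal curve meeting $\alpha_1$ in exactly two points. The $\alpha$'s are pairwise in minimal position, and the only bigons are the two between $\alpha_1$ and $\beta_1$; each is crossed exactly once by one of $\alpha_2,\alpha_3$, from the $\alpha_1$-side to the $\beta_1$-side. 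No bigon has empty interior.

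The fix is easy and is what the paper does: induct on $m$ and, for $m=1$, take $D$ innermost only among $\beta_1$--$\alpha_k$ bigons. Then any $\alpha_\ell$-arc in $\mathrm{int}(D)$ with both endpoints on the $\alpha_j$-side would give an $\alpha_j$--$\alpha_\ell$ bigon (impossible), and with both endpoints on the $\beta_1$-side would give a smaller $\beta_1$--$\alpha_\ell$ bigon (contradicting innermostness); so every such arc runs across $D$, and pushing $\beta_1$ across $D$ removes two points of $\beta_1\cap\alpha_j$ while merely sliding the $\beta_1\cap\alpha_\ell$ points, hence $N$ drops. Your write-up becomes correct once you replace the ``empty interior'' claim by this weaker innermost condition and handle the cross-arcs as above.
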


\begin{proof}
	By induction it suffices to show this for $m=1$. After an isotopy of $\beta_1$, we may assume $\beta_1$ is transverse to each $\alpha_k$.
	
	So suppose that $\alpha_1, \ldots, \alpha_n, \beta_1$ has two curves
	which are not in minimal position in $S-P$. As we assume that the $\alpha_i$ are in pairwise minimal position by Lemma~\ref{lem:bigoncriterion} (Bigon Criterion)
	there is then a bigon $B$ bounded by subarcs of $\beta_1$ and some $\alpha_j$ in $S-P$. We may assume that the bigon is innermost among all bigons between $\beta_1$ and any $\alpha_k$.
	Pushing $\beta_1$ past this bigon (as in the proof of the bigon criterion, compare \cite{Primer}) decreases $\sum i(\beta_1, \alpha_j)$ by exactly two. 
Hence this process terminates
	after finitely many steps, producing a curve isotopic to $\beta_1$ in $S-P$ which is in minimal position with respect to each $\alpha_i$. Since at each stage the curves $\alpha_i$ are fixed the lemma follows.
\end{proof}

\begin{lemma}\label{lem:moveoffP}
	Let $\alpha,\beta\in\cd(S)$ and $P\subset S$ be a finite set.
Then we may find a geodesic $\alpha=\nu_0,\ldots ,\nu_k=\beta$ such that 
	$\nu_i \cap P =\emptyset$ for all $0<i<k$.
\end{lemma}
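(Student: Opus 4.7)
The plan is to start with an arbitrary geodesic from $\alpha$ to $\beta$ and, one interior vertex at a time, apply a small local isotopy that pushes it off the finite set $P$ without disturbing disjointness (or, in the torus case, the intersection number at most one) with its two neighbours. Since the number of interior vertices is finite and $P$ is finite, this is a completely local argument.

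Concretely, fix a geodesic $\alpha=\nu_0,\nu_1,\ldots,\nu_k=\beta$ in $\cd(S)$. If $k\leq 1$ there is nothing to prove, so assume $k\geq 2$ and consider an interior vertex $\nu_i$ with $0<i<k$. Since $\nu_i$ is disjoint from $\nu_{i-1}\cup\nu_{i+1}$ (or meets them in at most one point, which we may assume lies off $P$, in the torus case), each point $p\in P\cap\nu_i$ admits a small open neighbourhood $U_p\subset S$ disjoint from $\nu_{i-1}\cup\nu_{i+1}$, from the other points of $P$, and such that $U_p\cap\nu_i$ is a single arc through $p$.

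Inside each such $U_p$ I would then perform a small supported isotopy of $\nu_i$ that pushes the arc $U_p\cap\nu_i$ slightly off $p$. Since the support lies in $U_p$ and $U_p$ misses $\nu_{i-1}\cup\nu_{i+1}$, the isotopy introduces no new intersections with these neighbours, so the modified curve $\nu_i'$ is isotopic to $\nu_i$, is disjoint from $P$, and remains joined by an edge to each of $\nu_{i-1}$ and $\nu_{i+1}$ in $\cd(S)$. Carrying this out for $i=1,2,\ldots,k-1$ in turn (using the already modified $\nu_{i-1}'$ on the left, which still misses $P$, and the unmodified $\nu_{i+1}$ on the right) gives a path $\alpha=\nu_0,\nu_1',\ldots,\nu_{k-1}',\nu_k=\beta$ of the same length $k$, hence again a geodesic, whose interior vertices are disjoint from $P$.

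There is no real obstacle here; the only points to be vigilant about are the choice of neighbourhoods $U_p$ (small enough to avoid the neighbours and the other points of $P$) and checking that local isotopies do not affect the geodesic property, which follows from the fact that the number of edges is preserved and the original was a geodesic. The torus case requires no change: the local perturbation occurs inside $U_p$, which is disjoint from $\nu_{i\pm 1}$, so the geometric intersection with the neighbours is unchanged.
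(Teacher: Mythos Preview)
Your proof is correct and follows essentially the same approach as the paper: start from an arbitrary geodesic and inductively perturb each interior vertex off $P$ while maintaining adjacency to its (possibly already perturbed) left neighbour and its unmodified right neighbour. Your version is simply more detailed than the paper's, which records the same inductive perturbation in a single sentence.
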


\begin{proof} 
	Pick any geodesic $\alpha=\nu_0',\ldots, \nu_k'=\beta$ and set $\nu_0=\nu_0'$ and $\nu_k=\nu_k'$. Then inductively find a perturbation $\nu_i$ of
	 each $\nu_i'$ (for $0<i<k$) such that $\nu_i$ is disjoint from $P$ but still adjacent to $\nu_{i-1}$ and
	 $\nu_{i+1}'$. 
\end{proof}

\begin{proof}[Proof of Lemma~\ref{lem:realise-full}]
	We first prove \[ d_{\mathcal{C}^s(S-P)}([\alpha]_{S-P},[\beta]_{S-P}) \geq d^\dagger (\alpha,\beta).\] Indeed, by Lemma~\ref{lem:bigonlessreps}
	any geodesic between $[\alpha]_{S-P}$ and $[\beta]_{S-P}$ is realised by vertices $\alpha=\alpha_0,\ldots ,\alpha_k=\beta$ in $S-P$
	pairwise in minimal position. In particular $\alpha_i$ is adjacent to $\alpha_{i+1}$, and each $\alpha_i\in\cd(S)$, so we are done.

	We now prove \[ d_{\mathcal{C}^s(S-P)}([\alpha]_{S-P},[\beta]_{S-P}) \leq d^\dagger (\alpha,\beta).\] Indeed, by Lemma~\ref{lem:moveoffP} we
	can find a geodesic $\alpha=\nu_0,\ldots, \nu_k=\beta$ disjoint from $P$ then simply consider the path $[\nu_i]_{S-P}$ in $\mathcal{C}^s(S-P)$.
\end{proof}

\begin{remark}\label{rem:why-surviving}
	Lemma~\ref{lem:realise-full} is false if one would use the usual curve graphs $\mathcal{C}(S-P)$.
	In the proof this manifests itself as the problem that some vertices of $\mathcal{C}(S-P)$ cannot be represented by curves which are essential on $S$. In fact, the inclusion of $\mathcal{C}^s(S-P)$ into $\mathcal{C}(S-P)$ distorts distances by arbitrarily large amounts (compare \cite{MS} for a thorough discussion of such phenomena). So the geometry of $\cd(S)$ is really captured by the surviving curve graphs, not the usual curve graphs of
	$S-P$.
\end{remark}

\begin{theorem}\label{thm:hyperbolicity}
	The graph $\cd(S)$ is hyperbolic.
\end{theorem}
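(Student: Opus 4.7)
The plan is to verify the four-point condition (Definition~\ref{defn:fourpointcondition}) for $\cd(S)$ with a constant only slightly larger than the uniform constant $\delta$ furnished by Theorem~\ref{thm:uniform-hyperbolicity}. The strategy is, given four vertices, to construct a single finite puncture set $P \subset S$ so that all six pairs among the four curves are simultaneously in minimal position in $S-P$; then Lemma~\ref{lem:realise-full} translates every pairwise $\cd(S)$-distance into a $\mathcal{C}^s(S-P)$-distance, and the four-point condition transfers back from the (uniformly hyperbolic) surviving curve graph.

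Concretely, fix vertices $\alpha_1,\alpha_2,\alpha_3,\alpha_4 \in \cd(S)$. First I would perturb each $\alpha_i$ by an arbitrarily small smooth isotopy to a curve $\alpha_i'$ so that the four curves are pairwise transverse. Since a sufficiently small perturbation leaves $\alpha_i$ and $\alpha_i'$ disjoint, we have $d^\dagger(\alpha_i, \alpha_i') \leq 1$, so all pairwise distances (and hence Gromov products) for the two quadruples differ by a universally bounded amount. Next, for each pair $i<j$, the pair $\alpha_i',\alpha_j'$ bounds only finitely many bigons in $S$; pick one interior point inside each such bigon, and let $P$ be the union over the six pairs. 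Then $P$ is finite and disjoint from the four curves, and every disk complementary region of $\alpha_i' \cup \alpha_j'$ bounded by one arc of each contains a point of $P$, so no such region survives as a bigon in $S-P$. By the Bigon Criterion (Lemma~\ref{lem:bigoncriterion}), every pair is in minimal position in $S-P$.

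At this point Lemma~\ref{lem:realise-full} yields
\[ d^\dagger(\alpha_i',\alpha_j') \;=\; d_{\mathcal{C}^s(S-P)}\!\left([\alpha_i']_{S-P},[\alpha_j']_{S-P}\right) \]
for all pairs $i<j$. Since $S-P$ has the same (positive) genus as $S$, Theorem~\ref{thm:uniform-hyperbolicity} provides a $\delta$ independent of $P$ such that the right-hand distances satisfy the $\delta$-four-point condition. The distance identity transports this inequality to $\cd(S)$ for the perturbed quadruple, and absorbing the $O(1)$ error from the initial perturbation gives the $\delta'$-four-point condition for the original $\alpha_1,\ldots,\alpha_4$ with some universal $\delta' = \delta + C$.

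The main obstacle I anticipate is conceptual rather than technical: vertices of $\cd(S)$ are honest smooth curves, not isotopy classes, so I cannot appeal to Lemma~\ref{lem:bigonlessreps} to isotope the $\alpha_i$ themselves into minimal position. The whole point of the bigon-puncturing construction is to realize minimal position \emph{without moving the curves}, by instead enlarging the ambient surface via $P$. Once this is done the rest of the argument is essentially a transfer of hyperbolicity, and the only mild subtlety is the non-transversality of pairs, which is handled by the preliminary small perturbation and absorbed into the universal additive constant.
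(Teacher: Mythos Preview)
Your proposal is correct and follows essentially the same approach as the paper's proof: perturb to achieve pairwise transversality at $d^\dagger$-cost $\leq 1$, puncture all bigons to force minimal position in $S-P$, invoke Lemma~\ref{lem:realise-full} to identify pairwise distances with those in $\mathcal{C}^s(S-P)$, and pull back the uniform four-point inequality from Theorem~\ref{thm:uniform-hyperbolicity}. The paper differs only in cosmetic details: it keeps the basepoint curve $\mu$ unperturbed (perturbing only the other three), which yields the explicit constant $\delta+4$, and it is slightly more careful about the perturbation step, first taking a disjoint push-off $\alpha''$ of $\alpha$ and then making a small generic perturbation of $\alpha''$---your phrase ``a sufficiently small perturbation leaves $\alpha_i$ and $\alpha_i'$ disjoint'' is not literally true for an arbitrary small isotopy, but the intended two-step construction fixes this immediately.
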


\begin{proof}
	In fact we prove that $\cd(S)$ is $(\delta+4)$-hyperbolic, where $\delta$ is the uniform constant given by 
	Theorem~\ref{thm:uniform-hyperbolicity}. More precisely we show for arbitrary vertices
	 $\mu,\alpha,\beta,\gamma\in \cd(S)$ we have that 
	\begin{equation}\label{eqn:whatwewant}\langle \alpha, \gamma \rangle_\mu \geq \min \{\langle \alpha, 
	\beta \rangle_\mu, \langle \beta, \gamma \rangle_\mu\} -\delta-4.\end{equation}

	In order to prove this we relate the vertices $\mu,\alpha,\beta,\gamma$ with vertices of some $\mathcal{C}^s(S-P)$
	 for some finite $P\subset S$. The first obstacle is to remove the pathology of pairs of vertices of $\cd(S)$ 
	that are not transverse. 

        To do so, we find vertices $\alpha',\beta',\gamma'$ with properties (\ref{item:disjoint}) and (\ref{item:transverse}) below. One should think of the primed curves as perturbations of the unprimed curves.


        We set $\mu'=\mu$.
	\begin{enumerate}[(i)]
		\item \label{item:disjoint} We have that $d^\dagger(\alpha,\alpha'),d^\dagger(\beta,\beta'),d^\dagger(\gamma,\gamma') \leq 1$, and
		\item \label{item:transverse} the vertices $\mu',\alpha',\beta',\gamma'$ are transverse.
	\end{enumerate}

	{\bf Construction of $\alpha',\beta',\gamma'$:} 
	To ensure item (\ref{item:disjoint}) we first find an $\alpha''$ disjoint from and isotopic to $\alpha$ and then 
	find a small enough perturbation $\alpha'$ of $\alpha''$, such that item (\ref{item:transverse}) holds (and similarly do 
	this for $\beta$ and $\gamma$). 

	We now choose a finite subset $P\subset S$ such that any bigon between a pair of $\mu',\alpha',\beta',\gamma'$ contains
	a point of $P$. 
	By Lemma~\ref{lem:bigoncriterion} (Bigon Criterion)  this ensures that 
	$\mu',\alpha',\beta',\gamma'$ are pairwise in minimal position in $S-P$.

	By Lemma~\ref{lem:realise-full} we have that 
	\begin{equation}\label{eqn:same} d^\dagger(\kappa',\lambda')=d_{\mathcal{C}^s(S-P)}([\kappa']_{S-P},[\lambda']_{S-P}),\end{equation} 
	moreover by property (\ref{item:disjoint}) above we see that 
	\begin{equation} \label{eqn:almostsame} |\langle \kappa', \lambda' \rangle_{\mu'}-\langle \kappa, \lambda \rangle_{\mu}| \leq 2, \end{equation}
	whenever $\kappa,\lambda\in\{\alpha,\beta,\gamma\}$. By (\ref{eqn:same}) above, Theorem~\ref{thm:uniform-hyperbolicity}, and (\ref{eqn:almostsame}) above, we obtain (\ref{eqn:whatwewant}) 
	 above, and so the theorem is proved.
\end{proof}

\section{Elements acting hyperbolically on $\cd(S)$} \label{sec:elementsactinghyp}

Let $S$ be a closed hyperbolic orientable surface and $P\subset S$ be finite. Recall the definition of asymptotic translation length $|g|$ from Section~\ref{sec:hyperbolic-geometry}. For $f\in \Mcg(S-P)$ we define $|f|$ to be the asymptotic translation length of the action of $f$ on $\mathcal{C}^s(S-P)$, and for $\varphi\in\Diff(S)$ we define $|\varphi|$ similarly via its action on $\cd(S)$. 

We now construct hyperbolic elements of $\mathrm{Diff}(S)$ on $\cd(S)$. To do this we use an important result of Masur and Minsky \cite{MM1}.

\begin{theorem} \label{thm:mm} Depending only on the topology of $S-P$ there exists $c>0$ such that for any
pseudo-Anosov $f\in\Mcg(S-P)$ we have $|f|\geq c >0$.
\end{theorem}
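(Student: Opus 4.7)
Plan for the proof of Theorem~\ref{thm:mm}: The strategy is to reduce the claim to the classical Masur--Minsky theorem \cite{MM1}, which gives exactly the analogous statement for the ordinary curve graph: there is a constant $c>0$ depending only on the topology of $S-P$ such that every pseudo-Anosov in $\Mcg(S-P)$ acts on $\mathcal{C}(S-P)$ with asymptotic translation length at least $c$. The task is therefore to transfer this bound from $\mathcal{C}(S-P)$ to $\mathcal{C}^s(S-P)$.

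I would first verify that the $\Mcg(S-P)$-action on $\mathcal{C}^s(S-P)$ is well defined. Any $f\in\Mcg(S-P)$ extends to a mapping class of the closed surface $S$ by filling in the punctures using the permutation $f$ induces on $P$; hence $f$ carries curves essential in $S$ to curves essential in $S$, so the subset of surviving curves is $f$-invariant and $f$ preserves the adjacency relation (disjointness, or intersection at most one in the torus case).

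The core step is the following observation: when $g\geq 2$ (and also when $g=1$ with $|P|\leq 1$), the surviving curve graph is literally a subgraph of the ordinary curve graph, since the vertex set of $\mathcal{C}^s(S-P)$ is a subset of that of $\mathcal{C}(S-P)$ and both graphs define edges via the same condition. Because every edge-path in $\mathcal{C}^s(S-P)$ is a fortiori an edge-path in $\mathcal{C}(S-P)$, distances only grow under this inclusion, so
\[ d_{\mathcal{C}^s(S-P)}(\alpha, f^n\alpha) \;\geq\; d_{\mathcal{C}(S-P)}(\alpha, f^n\alpha) \]
for any surviving curve $\alpha$. Taking the limit in the definition of $|f|$ gives $|f|_{\mathcal{C}^s(S-P)}\geq |f|_{\mathcal{C}(S-P)}\geq c$, and the classical Masur--Minsky bound finishes the argument.

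The remaining case, $g=1$ with $|P|\geq 2$, is where I expect the main obstacle, because the modified edge convention on $\mathcal{C}^s$ adds intersection-at-most-one edges that are absent in $\mathcal{C}$, so $\mathcal{C}^s$ is no longer a subgraph of $\mathcal{C}$. Here I would replace the strict subgraph comparison by a coarsely Lipschitz one: each intersection-one edge in $\mathcal{C}^s$ can be realised by a bounded detour in $\mathcal{C}(T^2-P)$, obtained (perhaps after one further surgery, to ensure essentiality in the punctured surface) from the boundary of a regular neighbourhood of the union of the two curves. This yields $|f|_{\mathcal{C}^s(T^2-P)}\geq C\cdot |f|_{\mathcal{C}(T^2-P)}$ for a constant $C>0$ depending only on the topology, to which Masur--Minsky again applies to yield the theorem.
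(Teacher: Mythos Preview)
Your proposal is correct and follows exactly the paper's approach: invoke the Masur--Minsky lower bound on $\mathcal{C}(S-P)$ and transfer it to $\mathcal{C}^s(S-P)$ via the (coarse) inclusion. The paper's proof of Theorem~\ref{thm:mm} simply says the transfer ``follows immediately'' (i.e.\ your subgraph observation for $g\geq 2$), and the torus discrepancy you flag is handled separately in Section~\ref{sec:torus} by precisely the distance-$\leq 2$ argument you sketch.
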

\begin{proof}By \cite[Proposition~4.6]{MM1} there exists $c>0$ depending only on the topology of $S-P$ such that 
\[d_{\mathcal{C}(S-P)}(f^n v, v) \geq c |n|,\] for any $v\in \mathcal{C}(S-P)$ and $n\in\Z$. The same result follows immediately for the case of $\mathcal{C}^s(S-P)$ in place of $\mathcal{C}(S-P)$ and therefore $|f|\geq c>0$. \end{proof}

Similar to our convention and notation with curves, we write $[\varphi]_{S-P}$ for the isotopy class of $\varphi\in\Diff(S)$ rel $P$. Whenever we write this, we also assert that $\varphi(P) =P$. We are now ready to state a general construction of hyperbolic elements of $\Diff(S)$ on $\cd(S)$.

\begin{lemma}\label{lem:pas-hyperbolic}
Let $P\subset S$, $f\in\Mcg(S-P)$, and $\varphi\in\Diff(S)$ be such that $\varphi(P)=P$ and
$ f=[\varphi]_{S-P}.$
Then for any $\alpha\in\cd(S)$ with $\alpha\subset S-P$ and any $i\in\mathbb{Z}$ we have that 
\begin{equation}\label{eqn:trans} d_{\mathcal{C}^s(S-P)}([\alpha]_{S-P},f^i [\alpha]_{S-P}) \leq d^\dagger(\alpha,\varphi^i\alpha). \end{equation}
Furthermore $|f|\leq |\varphi|$. In particular if $f$ is pseudo-Anosov then $\varphi$ is a hyperbolic element.
\end{lemma}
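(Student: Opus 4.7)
The plan is to establish the key inequality (\ref{eqn:trans}) by pushing a geodesic in $\cd(S)$ forward to a path in $\mathcal{C}^s(S-P)$ of the same length; the asymptotic statements then follow by dividing and invoking Theorem~\ref{thm:mm}.

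First, I would fix $i\in\Z$ and let $k = d^\dagger(\alpha,\varphi^i\alpha)$. Both endpoints $\alpha$ and $\varphi^i\alpha$ are disjoint from $P$: the first by hypothesis, and the second because $\varphi(P)=P$ implies $\varphi^i(P)=P$, so $\varphi^i\alpha \cap P = \varphi^i(\alpha\cap P)=\emptyset$. By Lemma~\ref{lem:moveoffP}, I can choose a geodesic $\alpha=\nu_0,\nu_1,\ldots,\nu_k=\varphi^i\alpha$ in $\cd(S)$ with $\nu_j\cap P=\emptyset$ for all $j$ (the interior vertices by the lemma, the endpoints by the previous sentence).

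Next, I would check that each $[\nu_j]_{S-P}$ is a well-defined vertex of $\mathcal{C}^s(S-P)$. Since $\nu_j$ is a vertex of $\cd(S)$, it is essential in $S$, hence not null-homotopic in $S-P$. It is also non-peripheral in $S-P$: otherwise $\nu_j$ would bound a disk in $S$ containing a single point of $P$, which would force $\nu_j$ to be null-homotopic in $S$, contradicting essentiality. For the same reason $\nu_j$ remains essential after filling in the punctures of $S-P$, so $[\nu_j]_{S-P}$ is surviving (for $g=1$ any essential curve in the torus is non-separating, so this is immediate). Consecutive $\nu_j,\nu_{j+1}$ are disjoint in $S$ (or intersect at most once when $g=1$), and so $[\nu_j]_{S-P},[\nu_{j+1}]_{S-P}$ are joined by an edge in $\mathcal{C}^s(S-P)$ under either convention. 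This produces a path of length at most $k$ in $\mathcal{C}^s(S-P)$ from $[\alpha]_{S-P}$ to $[\varphi^i\alpha]_{S-P}$.

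To finish the inequality, I would identify $[\varphi^i\alpha]_{S-P}=f^i[\alpha]_{S-P}$: by the hypothesis $f=[\varphi]_{S-P}$, and since $\varphi(P)=P$, the relation $f\cdot[\gamma]_{S-P}=[\varphi\gamma]_{S-P}$ iterates to give the claim. This proves (\ref{eqn:trans}).

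Finally, applying (\ref{eqn:trans}) to $i$-th powers, dividing by $i$, and taking $i\to\infty$ with basepoint $[\alpha]_{S-P}$ yields
\[|f|=\lim_{i\to\infty}\tfrac{1}{i}d_{\mathcal{C}^s(S-P)}([\alpha]_{S-P},f^i[\alpha]_{S-P}) \leq \lim_{i\to\infty}\tfrac{1}{i}d^\dagger(\alpha,\varphi^i\alpha)=|\varphi|.\]
When $f$ is pseudo-Anosov, Theorem~\ref{thm:mm} gives $|f|\geq c>0$, so $|\varphi|>0$ and $\varphi$ is a hyperbolic isometry of $\cd(S)$. The only subtle point I foresee is verifying the surviving/non-peripheral condition for $[\nu_j]_{S-P}$, but as above this reduces cleanly to the fact that vertices of $\cd(S)$ do not bound disks in $S$.
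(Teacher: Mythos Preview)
Your proof is correct and follows the paper's approach: push a $\cd(S)$--geodesic off $P$ via Lemma~\ref{lem:moveoffP}, take isotopy classes in $S-P$ to get a path of the same length in $\mathcal{C}^s(S-P)$, then pass to asymptotic translation lengths and invoke Theorem~\ref{thm:mm}. Your deduction of $|f|\leq|\varphi|$ is in fact slightly cleaner than the paper's: the paper establishes the auxiliary inequality $|f|\leq\tfrac{1}{i}d^\dagger(\alpha,\varphi^i\alpha)$ for \emph{every} $\alpha\in\cd(S)$ (handling the case $\alpha\cap P\neq\emptyset$ by a perturbation $\alpha'$), whereas you simply fix a basepoint $\alpha\subset S-P$ and use basepoint-independence of asymptotic translation length, which suffices.
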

\begin{proof}
	We observe that $[\varphi^i \alpha]_{S-P}=f^i[\alpha]_{S-P}$. Given any $i\in\mathbb{Z}$ by
	Lemma~\ref{lem:moveoffP} there exists a geodesic in $\cd(S)$ connecting $\alpha$ and
	$\varphi^i\alpha$ with each vertex disjoint from $P$. Consider the sequence of 
	isotopy classes of these curves on $S-P$. This sequence is a path in $\mathcal{C}^s(S-P)$ of the same length,
	and this proves the first inequality.

Now we show that $|f|\leq |\varphi|$. Given arbitrary $i\in\Z$ and $\alpha\in\cd(S)$ we claim that
\[|f|\leq \frac{1}{i}d^\dagger(\alpha,\varphi^i\alpha).\]
If $\alpha\cap P = \emptyset$ then this is immediate by Equation~\ref{eqn:trans}. So now we assume otherwise i.e. $\alpha\cap P \neq \emptyset$. By Lemma~\ref{lem:moveoffP} there exists a geodesic $\nu_0,\ldots, \nu_k$ between $\alpha$ and $\varphi^i\alpha$ such that whenever $0<i<k$ then $\nu_i\cap P=\emptyset$. We now pick a sufficiently small perturbation $\alpha'$ of $\alpha$ about $\alpha\cap P$ in a neighborhood disjoint from $\nu_1$ and $\varphi^{-i}\nu_{k-1}$, which is possible because the latter two curves are closed subsets disjoint from $P$. Hence the $\nu_i$ also connect $\alpha'$ and $\varphi^i\alpha'$. Therefore
\[d^\dagger (\alpha', \varphi^i \alpha') \leq k=d^\dagger(\alpha,\varphi^i \alpha).\]
Since $\alpha'\subset S-P$ we obtain $i|f|\leq d^\dagger (\alpha', \varphi^i \alpha') \leq k$ as required.

Finally if $f$ is pseudo-Anosov then we have $0<|f|$ by Theorem~\ref{thm:mm} and therefore $0 < |\varphi|$ because $|f|\leq |\varphi|$.
 \end{proof}

\section{Proof of the Main Theorem} \label{sec:proofmainthm}
We now verify the criteria of Bestvina--Fujiwara  (cf.\ Theorem \ref{thm:BF_infinite}) in order to prove the
existence of unbounded quasi-morphisms on $\Diff_0(S_g)$ for any $g \geq 2$.
\begin{theorem}\label{thm:maineasy}
  For $g\geq 2$ the space $\widetilde{\mathrm{QH}}(\Diff_0(S_g))$ of
  unbounded quasi-morphisms on $\Diff_0(S_g)$ is
  infinite dimensional.
\end{theorem}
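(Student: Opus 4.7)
The plan is to verify the hypotheses of the Bestvina--Fujiwara criterion (Theorem~\ref{thm:BF_infinite}) for the action of $G=\Diff_0(S_g)$ on the hyperbolic graph $\cd(S)$ (hyperbolic by Theorem~\ref{thm:hyperbolicity}). That is, I need to exhibit two elements $\psi_1,\psi_2\in\Diff_0(S_g)$ that both act hyperbolically on $\cd(S)$ and that are independent with respect to this action.

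First I would produce a single hyperbolic element $\psi\in\Diff_0(S_g)$. Pick a point $p\in S_g$ and a filling based loop $\gamma\in\pi_1(S_g,p)$; since $g\geq 2$, filling loops exist in abundance. By Kra's theorem the image of $\gamma$ under the Birman exact sequence is a pseudo-Anosov mapping class $f\in\Mcg(S_g-p)$. Realise this mapping class by a diffeomorphism $\psi\in\Diff(S_g)$ with $\psi(p)=p$ and $[\psi]_{S_g-p}=f$; because $\psi$ is a point-pushing map along a loop on $S_g$ it is isotopic to the identity on $S_g$, hence $\psi\in\Diff_0(S_g)$. Lemma~\ref{lem:pas-hyperbolic} combined with Theorem~\ref{thm:mm} then gives $|\psi|>0$, so $\psi$ acts hyperbolically on $\cd(S)$.

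Next I would construct the second element and verify independence. Observe that the map $p^\dagger\colon \cd(S)\to \mathcal{C}(S)$ sending a smooth essential simple closed curve to its isotopy class is $1$-Lipschitz and $\Diff(S_g)$-equivariant. Crucially, since $\Diff_0(S_g)$ acts trivially on $\mathcal{C}(S)$, for any $h\in\Diff_0(S_g)$ and $\alpha\in\cd(S)$ we have $p^\dagger(h\alpha)=p^\dagger(\alpha)$. Consequently, if $\alpha$ lies on a quasi-axis of some hyperbolic element of $\Diff_0(S_g)$, the image of that quasi-axis under $p^\dagger$ is contained in a uniformly bounded neighbourhood of $p^\dagger(\alpha)$ in $\mathcal{C}(S)$. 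Now choose any $\varphi\in\Diff(S_g)$ whose mapping class is pseudo-Anosov on $S_g$, so that $\varphi$ acts on $\mathcal{C}(S)$ with positive asymptotic translation length $\tau>0$. Set $\psi_1=\psi$ and $\psi_2=\varphi^k\psi\varphi^{-k}$ for a large integer $k$; both lie in $\Diff_0(S_g)$ (the latter because $\Diff_0(S_g)$ is normal in $\Diff(S_g)$) and both are hyperbolic on $\cd(S)$ with equal translation length.

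Finally I would check independence. A quasi-axis of $\psi_2$ through a curve $\beta$ can be taken to be the $\psi_2$-orbit of $\beta=\varphi^k\alpha$, which is $\{\varphi^k\psi^n\alpha\}_{n\in\Z}$, and its image under $p^\dagger$ lies in a bounded neighbourhood of $[\varphi^k\alpha]_{S_g}=\varphi^k\cdot[\alpha]_{S_g}$. If $\psi_1\sim\psi_2$ in the sense of Definition~\ref{def:thicksim} with respect to $G=\Diff_0(S_g)$, then arbitrarily long subsegments of the quasi-axis of $\psi_1$ would be translated by elements of $\Diff_0(S_g)$ into a bounded neighbourhood of the quasi-axis of $\psi_2$; applying the equivariant $1$-Lipschitz projection $p^\dagger$ and using that $\Diff_0(S_g)$ fixes isotopy classes gives a uniform bound on $d_{\mathcal{C}(S)}([\alpha]_{S_g},\varphi^k[\alpha]_{S_g})$ independent of $k$. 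Since this distance grows at least like $k\tau$, for all sufficiently large $k$ we get a contradiction and hence $\psi_1\not\sim\psi_2$. Theorem~\ref{thm:BF_infinite} then yields the desired infinite-dimensional space of unbounded quasi-morphisms. The main obstacle is conceptual rather than technical: one must notice that triviality of the $\Diff_0(S_g)$-action on $\mathcal{C}(S)$ can be leveraged through the equivariant projection $p^\dagger$ to rule out the existence of $G$-conjugating elements, sidestepping the total failure of WPD highlighted in the introduction.
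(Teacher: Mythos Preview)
Your proof is correct and follows essentially the same route as the paper: construct a point-pushing pseudo-Anosov $\psi\in\Diff_0(S_g)$, conjugate it by a high power of a pseudo-Anosov $\varphi\in\Diff(S_g)$, and use the $\Diff(S_g)$-equivariant $1$-Lipschitz projection $\cd(S)\to\mathcal{C}(S)$ together with the triviality of the $\Diff_0(S_g)$-action on $\mathcal{C}(S)$ to force independence. One cosmetic sharpening: since $\psi\in\Diff_0(S_g)$, the image of the quasi-axis $\{\psi^n\alpha\}$ under $p^\dagger$ is in fact the single point $[\alpha]$, not merely a bounded set, which makes the final distance comparison even cleaner.
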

\begin{proof} 
  Consider the natural map
  \[ p : \cd(S) \to \mathcal{C}(S), \] associating to a curve its
  isotopy class. This map is clearly $1$-Lipschitz and equivariant for the natural
  actions of $\Diff(S)$ on both graphs. 

  Now consider any point $P \in S$ and let $\psi$ be a smooth
  representative of any point-pushing pseudo-Anosov of $S-P$ which
  fixes $P$. Then for any curve $\alpha$ there exists $K$ such that the
  sequence $(\psi^n\alpha)_{n\in\mathbb{Z}} = A_\psi$ is a $K$-quasi-axis
  for $\psi$. So now fix such an $\alpha$ and $K$. Furthermore observe that the image $p(A_\psi)$ is the single
  point $[\alpha]\in\mathcal{C}(S)$ because $\psi$ is isotopically trivial.

  Let $B = B(K, K, \delta)$ be the constant defining the relation $\thicksim$ as given above in Definition \ref{def:thicksim},
  see Section~\ref{sec:hyperbolic-geometry}.

  Next, choose $\varphi\in\Diff(S)$ to be any representative of a pseudo-Anosov mapping class of $S$.
  Then $\varphi^k\psi\varphi^{-k}\in\Diff_0(S)$ also.
  We can now choose $A_{\varphi^k\psi\varphi^{-k}} \coloneq \varphi^k A_\psi$
  to be a $K$-quasi-axis for $\varphi^k\psi\varphi^{-k}$,
  and so we have that $p(A_{\varphi^k\psi\varphi^{-k}}) = [\varphi^k\alpha]$.

  Since the mapping class determined by $\varphi$ acts on $\mathcal{C}(S)$ 
  as a hyperbolic isometry, we can choose $k$ large enough so that 
  \[d_{\mathcal{C}(S)}(\alpha, \varphi^k\alpha) > B.\]

  On the other hand for any element $\phi \in \Diff_0(S)$ we have
  \[ p(\phi A_\psi) = p(A_\psi), \] and thus no vertex of $\phi
  A_\psi$ is within a distance $B$ of a vertex of $A_{\varphi^k\psi\varphi^{-k}}$. Hence these elements are independent in $\Diff_0(S)$ and we may apply Theorem \ref{thm:BF_infinite}. 
\end{proof}
\noindent At this point we also recall how to prove Corollary~\ref{cor:frag} from the introduction:
\begin{corollary}
	For $g \geq 1$ the group $\Diff_0(S_g)$ is not uniformly perfect and has unbounded fragmentation norm.
\end{corollary}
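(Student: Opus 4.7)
The plan is a short formal deduction chaining three facts already in the text: Theorem~\ref{intro:main-theorem}, Lemma~\ref{lem:uniform_QM}, and Corollary~\ref{cor:frag_unbounded}. First I would invoke Theorem~\ref{intro:main-theorem}, which has just been established for $g \geq 2$ via Theorem~\ref{thm:maineasy}, to produce at least one non-trivial unbounded homogeneous quasi-morphism on $\Diff_0(S_g)$. For the torus case $g = 1$, the same scheme adapts: pseudo-Anosov point-pushing maps still exist on the once-punctured torus (coming from hyperbolic elements of $\mathrm{SL}_2(\Z)$), and one can still choose a pseudo-Anosov of $S_1$ itself to play the role of $\varphi$ in the conjugation trick, so that two point-pushing pseudo-Anosovs become independent in $\Diff_0(S_1)$ via the same fiber-of-$p$ argument.

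Next, I would apply the contrapositive of Lemma~\ref{lem:uniform_QM} to obtain that $\Diff_0(S_g)$ is not uniformly perfect: if every element were a product of at most $N$ commutators, then by homogeneity and conjugation invariance any homogeneous quasi-morphism $\varphi$ of defect $D$ would satisfy a bound of the form $|\varphi(g)| \leq (2N-1)D$ for every $g$, contradicting the existence of an unbounded such $\varphi$ produced in the previous step. This yields the first assertion of the corollary.

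Finally, I would invoke Corollary~\ref{cor:frag_unbounded} to transfer the conclusion to the fragmentation norm: commutator length is a conjugacy-invariant norm on $\Diff_0(S_g)$ and is coarsely dominated by the fragmentation norm (by the Burago--Ivanov--Polterovich universality result recalled in Section~\ref{sec:quasi-morphisms}), so the unboundedness of commutator length forces the fragmentation norm to be unbounded as well. There is no substantive obstacle here; given the machinery already assembled, the corollary is an essentially immediate formal consequence of the three cited results, with the only minor point of care being the torus case of Theorem~\ref{intro:main-theorem} handled above.
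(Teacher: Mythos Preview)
Your overall chain --- unbounded quasi-morphism $\Rightarrow$ not uniformly perfect (Lemma~\ref{lem:uniform_QM}) $\Rightarrow$ unbounded fragmentation norm (Corollary~\ref{cor:frag_unbounded}) --- is exactly the paper's argument, so for $g\geq 2$ there is nothing to add.

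The genuine gap is in your treatment of the torus. You assert that point-pushing pseudo-Anosovs on the once-punctured torus come from hyperbolic elements of $\mathrm{SL}_2(\Z)$. This is false on two counts. First, the forgetful map $\Mcg(S_1-p)\to\Mcg(S_1)$ is an \emph{isomorphism} (both groups are $\mathrm{SL}_2(\Z)$), so the Birman kernel is trivial and there are \emph{no} non-trivial point-pushing classes on the once-punctured torus at all. Second, the hyperbolic elements of $\mathrm{SL}_2(\Z)$ you have in mind are Anosov maps of $S_1$; they are not isotopic to the identity on the closed torus and hence do not lie in $\Diff_0(S_1)$, so they cannot serve as the element $\psi$ in the fiber-of-$p$ argument. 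The paper's fix (Section~\ref{sec:torus}) is to take $|P|\geq 2$: with two marked points one can push one of them and obtain a genuine pseudo-Anosov on $S_1-P$ that is isotopic to the identity on $S_1$. Once you make this correction, the rest of your deduction goes through unchanged.
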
 
\begin{proof}
	By Theorem~\ref{thm:maineasy}, there is an unbounded quasi-morphims on $\Diff_0(S_g)$. Hence, by
	Lemma~\ref{lem:uniform_QM}, $\Diff_0(S_g)$ is not uniformly perfect. Now Corollary~\ref{cor:frag_unbounded}
	implies that the fragmentation norm is unbounded.
\end{proof}

\subsection{Area-preserving case}
In fact, the argument in the proof of Theorem~\ref{thm:maineasy} can
  also be used to find quasi-morphisms on $\Diff_0(S_g)$, which are
  unbounded when restricted to the subgroup of Hamiltonian
  diffeomorphisms (which is known to be simple by a classical result of Banyaga (cf.\ \cite[Theorem 10.25]{McSa}).
\begin{theorem}\label{thm:area_pres}
  For $g\geq 2$ the space of unbounded quasi-morphisms on $\textrm{Ham}(S_g)$ that extend to $\Diff_0(S_g)$ is
  infinite dimensional.
\end{theorem}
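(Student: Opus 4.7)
The plan is to rerun the proof of Theorem~\ref{thm:maineasy} using two independent hyperbolic elements both lying in $\textrm{Ham}(S_g)$. Once we find $\psi_1, \psi_2 \in \textrm{Ham}(S_g)$ that are independent for the $\Diff_0(S_g)$-action on $\cd(S_g)$, applying Theorem~\ref{thm:BF_infinite} to $G = \Diff_0(S_g)$ produces an infinite-dimensional space of homogeneous quasi-morphisms on $\Diff_0(S_g)$, each unbounded on some cyclic subgroup $\langle w \rangle$ for a word $w$ in $\psi_1$ and $\psi_2$. Since $\psi_1, \psi_2 \in \textrm{Ham}(S_g)$, the word $w$ lies in $\textrm{Ham}(S_g)$, so the restriction of the quasi-morphism to $\textrm{Ham}(S_g)$ remains unbounded and extends to $\Diff_0(S_g)$ by construction.

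To produce $\psi_1$, fix $P \in S_g$ and a filling loop $\gamma \in \pi_1(S_g, P)$, so that by Kra's theorem (Section~\ref{sec:curve-graphs}) the associated point-pushing class in $\Mcg(S_g - P)$ is pseudo-Anosov. A point-pushing representative is the time-$1$ map of any smooth time-dependent vector field supported in a thin neighbourhood of $\gamma$ whose trace at $P$ realises $\gamma$. Choosing this vector field to be Hamiltonian with respect to the area form --- for instance, by prescribing a time-dependent Hamiltonian whose vector field at $\gamma(t)$ points along $\gamma'(t)$, cut off in a small tubular neighbourhood --- produces a Hamiltonian time-$1$ map $\psi_1 \in \textrm{Ham}(S_g)$ representing the point-pushing pseudo-Anosov. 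By Lemma~\ref{lem:pas-hyperbolic}, $\psi_1$ acts hyperbolically on $\cd(S_g)$.

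To produce $\psi_2$, pick an area-preserving representative $\varphi \in \mathrm{Symp}(S_g)$ of a pseudo-Anosov mapping class of the closed surface; such a representative is furnished by Thurston's classical construction from the invariant transverse foliations, whose transverse measures assemble into an invariant area form. Since $\textrm{Ham}(S_g)$ is normal in $\mathrm{Symp}(S_g)$ as the kernel of the flux homomorphism, $\psi_2 := \varphi^k \psi_1 \varphi^{-k} \in \textrm{Ham}(S_g)$ for every $k$, and $\psi_2$ is hyperbolic on $\cd(S_g)$. Independence of $\psi_1$ and $\psi_2$ for the $\Diff_0(S_g)$-action now follows verbatim from the argument of Theorem~\ref{thm:maineasy}: the $\Diff(S_g)$-equivariant $1$-Lipschitz projection $p \colon \cd(S_g) \to \mathcal{C}(S_g)$ is constant on $\Diff_0(S_g)$-orbits, the quasi-axes $A_{\psi_1}$ and $A_{\psi_2} = \varphi^k A_{\psi_1}$ project to $[\alpha]$ and $[\varphi^k \alpha]$ respectively, and choosing $k$ large enough that $d_{\mathcal{C}(S_g)}([\alpha], [\varphi^k \alpha]) > B(K, K, \delta)$ keeps them far apart under every $\Diff_0(S_g)$-translate.

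The main obstacle is the Hamiltonian realisation of the point-pushing pseudo-Anosov: unlike in the diffeomorphism case, one cannot simply pick an arbitrary representative in $\Diff_0(S_g)$ and must exhibit an explicit Hamiltonian flow that pushes $P$ along $\gamma$ while remaining supported in a neighbourhood of $\gamma$. This reduces to an elementary but concrete construction in a tubular neighbourhood of $\gamma$. The symplectic realisation of $\varphi$ is a standard invocation of Thurston's construction, and the remainder of the argument is a direct transcription of the proof of Theorem~\ref{thm:maineasy}.
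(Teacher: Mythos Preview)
Your strategy matches the paper's: realise the point-pushing pseudo-Anosov and the conjugating pseudo-Anosov by area-preserving maps so that both independent hyperbolic elements lie in $\textrm{Ham}(S_g)$, and then invoke Theorem~\ref{thm:BF_infinite} exactly as in Theorem~\ref{thm:maineasy}. Your construction of a Hamiltonian point-push via an explicit time-dependent Hamiltonian flowing $P$ along $\gamma$ is more direct than the paper's route; the paper instead writes the filling loop as a commutator $[\alpha,\beta]$ in $\pi_1(S,P)$, uses Moser's lemma to realise the individual point-pushes $P_\alpha, P_\beta$ as $\omega$-preserving diffeomorphisms fixing $P$, and then observes that the commutator $[P_\alpha,P_\beta]$ is automatically Hamiltonian (commutators of elements of $\mathrm{Symp}_0$ lie in $\textrm{Ham}$, cf.\ \cite[Theorem~10.12]{McSa}). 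Both constructions are valid; yours avoids the extra Moser step at the cost of a slightly more hands-on local argument.

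There is, however, a small gap in your construction of $\varphi$. Thurston's canonical pseudo-Anosov representative is only a homeomorphism, smooth away from the prong singularities, and the invariant ``area form'' assembled from the transverse measures is singular there; in any case it need not coincide with the fixed smooth form $\omega$ defining $\textrm{Ham}(S_g)$. Normality of $\textrm{Ham}(S_g)$ in $\mathrm{Symp}(S_g,\omega)$ therefore does not apply to conjugation by this $\varphi$. The fix is exactly what the paper does: take any smooth representative of the pseudo-Anosov mapping class and apply Moser's lemma to obtain $\varphi\in\mathrm{Symp}(S_g,\omega)$. With that adjustment your argument goes through.
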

 \begin{proof} 
 	This is proved essentially like Theorem~\ref{thm:maineasy}. We explain how to ensure that the maps
 	chosen in that proof are Hamiltonian. 
 	We begin by taking two curves $\alpha, \beta$ so that the commutator $[\alpha,\beta]$ is filling.
 	A standard argument using Moser's lemma shows that we can find representatives of the point pushes
 	$P_\alpha, P_\beta$ which preserve a common area form $\omega$, and which both fix $P$. Then $\psi$ is 
    a representative of a pseudo-Anosov on $S-P$, and is a Hamiltonian diffeomorphism of $S$
    (as commutators of area preserving surface diffeomorphisms that are isotopic to the identity are
 	Hamiltonian; cf.\ \cite[Theorem 10.12]{McSa}). 
 	
 	Using Moser's lemma again, we can choose a representative of $\varphi$ which also preserves $\omega$.
 	Then, the conjugate of $\psi$ by $\varphi^k$ is also Hamiltonian. We can now conclude the argument precisely as in Theorem~\ref{thm:maineasy}, noting that the quasi-morphisms given by Theorem~\ref{thm:BF_infinite} are 
 	unbounded on the cyclic subgroup generated by the two independent (Hamiltonian) elements $\psi, \varphi^k\psi\varphi^{-1}$.
	 \end{proof}

\subsection{The genus one case} 
\label{sec:torus}
We are left with proving the main theorem in the case where $S$ is the
$2$-torus. The proof idea is essentially the same, but the details are
different in certain places. Here we sketch the necessary
changes in the proofs.


In Section~\ref{sec:hyperbolicity} we defined $\cd(S_1)$ and proved it is hyperbolic so no changes are required. In Section~\ref{sec:elementsactinghyp} Lemma~\ref{lem:pas-hyperbolic} goes through verbatim. However in Theorem~\ref{thm:mm} one needs to be more careful because $\mathcal{C}^s(S_1-P)$ has edges when curves have intersection at most one, whereas $\mathcal{C}(S-P)$ has edges when the curves are disjoint (for $|P|\geq 2$). However it is always true that if we add edges to $\mathcal{C}(S-P)$ whenever curves intersect at most once then the graphs are quasi-isometric. To see this, observe that two curves that intersect at most once are in fact distance at most $2$ in $\mathcal{C}(S-P)$. Therefore the statement of Theorem~\ref{thm:mm} is true for the torus.

In the proof of the main theorem for hyperbolic surfaces, we use point-pushing pseudo-Anosov mapping classes. On the torus we require $|P|\geq 2$ for the existence of such pseudo-Anosovs, and we can choose $\psi$ to be a point-pushing pseudo-Anosov on
$S- P$ by pushing one point. The rest of the argument
applies as before.

\subsection{The genus zero case}\label{sec:spherefail}
At this point, we quickly emphasise which parts of our proof strategy for $\Diff_0(S^2)$ break down (as it must, given that the fragmentation norm is bounded on the sphere). 
With our previous definition, $\cd(S^2)$ has no vertices because we require essential curves, and any curve on $S^2$ is inessential. If we were to modify the definition to allow non-essential curves, it is straightforward to see that $\cd(S^2)$ has diameter $2$. Hence, all elements of $\Diff_0(S^2)$ would act elliptically on that graph, and so \cite{BF} could not be invoked. 

\subsection{Quasi-morphisms on $\textrm{Homeo}_0(S)$}

By a result of Kotschick (cf.\ Theorem \ref{thm:automatic_cont_surface}) homogeneous quasi-morphisms on $\Diff_0(S)$ are automatically continuous with respect to the $C^0$-topology. Moreover, a brief inspection of the proof of Theorem \ref{thm:automatic_cont} shows that any homogeneous quasi-morphism extends to the $C^0$-closure of $\Diff_0(S)$ inside $\mathrm{Homeo}_0(S)$.  Since any homeomorphism on a closed surface can be uniformly approximated by diffeomorphisms (cf.\ \cite{Munk}), this then gives the desired extension to $\mathrm{Homeo}_0(S)$.
\begin{corollary}\label{cor:main-theorem}
  The space of unbounded quasi-morphisms on $\mathrm{Homeo}_0(S)$ is infinite dimensional for any closed surface $S$ of genus at least one.
\end{corollary}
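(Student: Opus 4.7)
The plan is to transfer the infinite-dimensional family of unbounded homogeneous quasi-morphisms on $\Diff_0(S_g)$ produced by Theorem~\ref{thm:maineasy} (together with the genus one case from Section~\ref{sec:torus}) to $\Homeo_0(S_g)$ by a density and continuity argument. I would start by invoking Kotschick's automatic continuity statement (Theorem~\ref{thm:automatic_cont_surface}), which guarantees that every homogeneous quasi-morphism $\varphi\colon \Diff_0(S_g)\to\R$ is continuous in the $C^0$-topology. Combined with the fact that $\varphi$ has bounded defect, the homogeneity condition will give the uniform (in fact equi-)continuity needed for a unique continuous extension to the $C^0$-closure.

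Next, I would appeal to the classical Munkres approximation theorem, which says that any homeomorphism of a closed smooth surface can be uniformly approximated by diffeomorphisms. Since $\Homeo_0(S_g)$ is $C^0$-connected and the approximations can be taken so as to remain in the identity component, one concludes that $\Diff_0(S_g)$ is $C^0$-dense in $\Homeo_0(S_g)$. Applying the previous paragraph, each $\varphi\in\widetilde{\mathrm{QH}}(\Diff_0(S_g))$ extends uniquely to a continuous function $\widetilde\varphi\colon \Homeo_0(S_g)\to\R$. A direct limiting argument shows $\widetilde\varphi$ is again a homogeneous quasi-morphism with the same defect as $\varphi$.

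Finally, I need to verify that the map $\varphi\mapsto\widetilde\varphi$ preserves linear independence modulo bounded functions, and produces \emph{unbounded} quasi-morphisms on $\Homeo_0(S_g)$. For both, it suffices to note that $\widetilde\varphi$ restricts back to $\varphi$ on $\Diff_0(S_g)$: if $\widetilde\varphi$ were bounded, so would be $\varphi$, contradicting Theorem~\ref{thm:maineasy}; and any nontrivial linear combination $\sum c_i\widetilde\varphi_i$ that is bounded on $\Homeo_0(S_g)$ would restrict to a bounded linear combination $\sum c_i\varphi_i$ on $\Diff_0(S_g)$, again contradicting the linear independence of the classes $[\varphi_i]$ in $\widetilde{\mathrm{QH}}(\Diff_0(S_g))$. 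This yields an infinite-dimensional space of unbounded (homogeneous) quasi-morphisms on $\Homeo_0(S_g)$, proving the corollary.

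The main obstacle, and the place where all the genuinely new input enters, is establishing the automatic continuity and the resulting extension property. Density via Munkres and the preservation arguments are essentially formal once the continuous extension is in hand; hence the proof reduces entirely to the content of Theorem~\ref{thm:automatic_cont_surface} (whose proof is deferred to the appendix), and the corollary follows by assembling these ingredients.
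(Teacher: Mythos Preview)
Your proposal is correct and follows essentially the same route as the paper: invoke Kotschick's automatic $C^0$-continuity (Theorem~\ref{thm:automatic_cont_surface}), use Munkres' approximation to get $C^0$-density of $\Diff_0(S)$ in $\Homeo_0(S)$, extend each homogeneous quasi-morphism continuously, and then observe that unboundedness and linear independence survive because the extensions restrict back to the original quasi-morphisms. The paper states this more tersely and does not spell out the linear-independence check, but the argument is the same.
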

\noindent Alternatively one could actually construct quasi-morphisms directly on the group $\mathrm{Homeo}_0(S)$ by considering a curve graph of {\em topologically embedded} curves with the obvious edge relation. The arguments used to prove Theorem \ref{thm:maineasy} readily extend, although some care is needed in dealing with topological transversality, minimal position and so on. 

A closer inspection of the proof of Theorem \ref{thm:automatic_cont} actually shows that the set of homogeneous quasi-morphisms of bounded defect is {\em point-wise equicontinous} which in view of Bavard Duality shows that the stable commutator length function is too.
\begin{theorem}\label{thm:scl_cont} The stable commutator length function $\mathrm{scl}\colon \Diff^r_0(S) \to \R$ is continuous with respect to the $C^0$-topology for any $0 \le r \le \infty$.
\end{theorem}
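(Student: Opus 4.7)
The plan is to combine Bavard Duality with a strengthened form of Theorem~\ref{thm:automatic_cont_surface} asserting equicontinuity of bounded-defect homogeneous quasi-morphisms. Since $\Diff^r_0(S)$ is perfect by Mather--Thurston, Bavard Duality gives for every $g \in \Diff^r_0(S)$,
\[
2\,\mathrm{scl}(g) = \sup_{\varphi}\frac{|\varphi(g)|}{D(\varphi)},
\]
where $\varphi$ ranges over nonzero homogeneous quasi-morphisms. Normalising $\psi = \varphi/D(\varphi)$, the right-hand side equals $\sup_{\psi \in \mathcal{F}} |\psi(g)|$, where $\mathcal{F}$ denotes the family of homogeneous quasi-morphisms on $\Diff^r_0(S)$ with $D(\psi) \leq 1$. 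Thus $2\,\mathrm{scl}$ is expressed as a pointwise supremum of the functions in $\mathcal{F}$.

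The key ingredient is that $\mathcal{F}$ is pointwise equicontinuous in the $C^0$-topology: for every $g_0 \in \Diff^r_0(S)$ and every $\epsilon > 0$ there is a $C^0$-neighborhood $V$ of $g_0$ with $|\psi(g) - \psi(g_0)| < \epsilon$ for all $g \in V$ and all $\psi \in \mathcal{F}$. Granted this, continuity of $\mathrm{scl}$ is formal: for $g \in V$, taking absolute values and then supremum over $\psi \in \mathcal{F}$ of the equicontinuity estimate gives $\sup_\psi |\psi(g)| \leq \sup_\psi |\psi(g_0)| + \epsilon$, and symmetrically. Hence $|\mathrm{scl}(g) - \mathrm{scl}(g_0)| \leq \epsilon/2$ for all $g \in V$, establishing $C^0$-continuity of $\mathrm{scl}$ at $g_0$, and since $g_0$ was arbitrary, on all of $\Diff^r_0(S)$.

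The equicontinuity statement itself is obtained by revisiting the proof of Theorem~\ref{thm:automatic_cont_surface} quantitatively. That proof controls an individual homogeneous quasi-morphism via fragmentation combined with the homogeneous commutator estimate $|\psi([a,b])| \leq D(\psi)$ and the homogeneity identity $\psi(g) = \psi(g^n)/n$. Tracking constants through each step, every bound on $|\psi(h)|$ for $h$ near the identity is a universal constant times $D(\psi)$; normalising to $D(\psi) \leq 1$ yields a modulus of continuity independent of $\psi$. Equicontinuity at a general $g_0$ then follows from equicontinuity at the identity by using conjugation invariance $\psi(g_0 h g_0^{-1}) = \psi(h)$ of homogeneous quasi-morphisms together with homogeneity, which absorbs the additive defect contribution one would naively get from the quasi-morphism inequality $|\psi(g_0 h) - \psi(g_0) - \psi(h)| \leq D(\psi)$ when passing to iterates.

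The principal obstacle is precisely this quantitative refinement. Theorem~\ref{thm:automatic_cont_surface} as stated only guarantees continuity of an individual quasi-morphism, and promoting the estimates to hold uniformly in $\psi \in \mathcal{F}$ requires confirming that the constants produced in its proof depend on $\psi$ only through $D(\psi)$ and not on any other feature of $\psi$. This is expected since the only scalar invariant of $\psi$ appearing in the natural estimates is its defect, but the verification is the content of the ``closer inspection'' referenced immediately before the theorem statement.
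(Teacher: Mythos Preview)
Your proposal is correct and follows essentially the same route as the paper: express $\mathrm{scl}$ via Bavard Duality as a supremum over defect-normalised homogeneous quasi-morphisms, observe that the estimates in the proof of Theorem~\ref{thm:automatic_cont} depend on $\psi$ only through $D(\psi)$ (yielding pointwise equicontinuity of the family $\mathcal{F}$), and conclude continuity of the supremum. Your exposition spells out more of the mechanics than the paper does, but the argument is the same.
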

\begin{proof}
For a group $G$ let  $\widetilde{\mathrm{QH}}(G)$ denote the group of homogeneous quasi-morphisms and let $\widetilde{\mathrm{QH}}_1(G)$ denote the subset of defect $D(\varphi) =1$. By Bavard Duality we have
\[\mathrm{scl}(g) = \sup_{\varphi \in \widetilde{\mathrm{QH}}(G)}\frac{|\varphi(g)|}{2D(\varphi)} = \sup_{\varphi \in \widetilde{\mathrm{QH}}_1(G)}\frac{|\varphi(g)|}{2}. \]
For $G = \Diff_0^r(S) $ the family of real-valued functions $\widetilde{\mathrm{QH}}_1(\Diff_0^r(S) )$ is equicontinuous at each $g\in G$ (cf. proof of Theorem~\ref{thm:automatic_cont} below)  so it follows that the right hand side is continuous in $g$, whence we deduce that the stable commutator length function is continuous. 
\end{proof}
\begin{remark}
The results above for $\mathrm{scl}$ is not specific to surface groups. However, it is known to be vacuous in any dimension other than $2$ and possibly $4$ in view of \cite{BIP,Tsu,Tsu_even} and Theorem~\ref{intro:main-theorem}. 
\end{remark}

\appendix
\section{Automatic continuity}\label{sec:automatic_cont}

It was observed by Entov--Polterovich--Py \cite{EPP} that homogeneous quasi-morphisms of area-preserving maps of surfaces are continuous in the $C^0$-topology if they vanish on all elements supported on disks of bounded area. The fact that such a statement holds was suggested by Kotschick \cite{Kot} to whom the idea is attributed. Kotschick also observed that this continuity holds in the setting of diffeomorphism groups, where the corresponding fragmentation properties are well known. However, the proof of this fact did not appear in  \cite{Kot}.

Most of the results on continuity of quasi-morphisms in this section are not new, but as they have not previously appeared in print in the form we present them, we choose to include them. What does appear to be new is the fact that the stable commutator length function is continuous. Our arguments follow the lines of \cite{EPP}, but things are significantly simpler than in the area-preserving case they consider.

\subsection*{Bounded fragmentation} For the sake of giving a uniform account we let $\Diff^r(M)$ denote the group of $C^r$-diffeomorphisms of a closed manifold $M$, $\Diff_0^r(M)$ the identity component of the group of $C^r$-diffeomorphisms, and $\Diff_0^\infty(M) = \Diff_0(M)$ the group of smooth diffeomorphisms. The following is just the observation that the standard fragmentation procedure for diffeomorphisms on compact manifolds yields factorisations of bounded length. In the case of $C^r$-diffeomorphisms this is elementary (see e.g.\ \cite[Lemma~2.1]{Mann}) and in the case of homeomorphisms it follows from classical results of Edwards--Kirby \cite{EdKir}.
\begin{lemma}[General Case: $C^r$-topology]\label{lem:bounded_frag}
For any $1 \le r \le \infty $, there is a neighborhood $\mathcal{U}_{Id}  \subseteq \mathrm{Diff}^r_0(M)$ of the identity map with respect to the $C^r$-topology such that any $f \in \mathcal{U}_{Id}$ can be  written as a product of $C_M$ diffeomorphisms supported on open disks for some constant $C_M$ depending only on the manifold.
\end{lemma}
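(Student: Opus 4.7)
The plan is to implement the standard fragmentation procedure and keep careful track of where the $C^r$-size of the factors appears, so that a single neighborhood of the identity works for all $f$ and the number of factors depends only on $M$.

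First I would fix, once and for all, a finite cover of $M$ by open balls $U_1, \dots, U_N$ such that each $U_i$ is contained in the domain of a chart and such that there exist slightly smaller open balls $V_i \Subset U_i$ which still cover $M$. The integer $N$ depends only on $M$; this will be the constant $C_M$. In each chart I would also fix a smooth bump function $\lambda_i$ that is $1$ on $V_i$ and compactly supported in $U_i$.

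Next I would choose the neighborhood $\mathcal{U}_{Id}$ so that for every $f \in \mathcal{U}_{Id}$ and every $i$, the map $f$ moves $\overline{V_i}$ by such a small $C^1$-amount that it remains inside $U_i$, and moreover the isotopy constructed below stays within a fixed larger $C^r$-neighborhood of the identity (this uniform bound is needed so that the inductive step below can be repeated $N$ times starting from any $f\in \mathcal U_{Id}$). Concretely I would take $\mathcal{U}_{Id}$ small enough in the $C^r$-topology that for each $i$, reading $f$ in the chart around $U_i$, the straight-line homotopy $h^i_t(x) = (1-t)x + t f(x)$ is well-defined on $\overline{V_i}$ and consists of embeddings into $U_i$.

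Then I would carry out an inductive construction: set $g_0 = f$, and assume $g_{i-1} \in \Diff_0^r(M)$ is close to the identity and equals the identity on $\overline{V_1 \cup \dots \cup V_{i-1}}$. Using $\lambda_i$ I would interpolate $g_{i-1}$ with the identity in the chart around $U_i$ to produce a diffeomorphism $f_i$ supported in $U_i$ such that $f_i = g_{i-1}$ on $\overline{V_i}$ and $f_i = \id$ off a neighborhood of $\overline{V_i}$ in $U_i$. Then $g_i \coloneq f_i^{-1} g_{i-1}$ is the identity on $\overline{V_1 \cup \dots \cup V_i}$. After $N$ steps the union $\bigcup V_i$ covers $M$, so $g_N = \id$ and therefore $f = f_1 \cdots f_N$ with each $f_i$ supported in the disk $U_i$.

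The main technical point — and really the only one that requires care — is choosing $\mathcal{U}_{Id}$ small enough at the start so that the $C^r$-size of $g_i$ remains controlled across all $N$ inductive steps, uniformly in $f \in \mathcal{U}_{Id}$. Since the interpolation in each chart is a smooth operation whose $C^r$-norm depends continuously on the input, a uniform bound on the starting neighborhood suffices; this is where the hypothesis $r \geq 1$ enters (the $C^0$-case is Edwards–Kirby, as the statement indicates).
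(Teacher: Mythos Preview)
Your proposal is correct and is exactly the standard fragmentation argument the paper has in mind; the paper does not actually give a proof but only cites \cite[Lemma~2.1]{Mann} and Edwards--Kirby, and your sketch is essentially a spelling-out of that reference. The one point worth flagging is that your inductive step silently uses that $f_i$ is the identity on each previously handled $V_j$ ($j<i$): this holds because $g_{i-1}$ is already the identity there, so the interpolation between $g_{i-1}$ and the identity is trivially the identity on those sets --- you might make this explicit.
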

In the case of surfaces one can essentially model the argument of Edwards--Kirby in the smooth case to obtain a bounded fragmentation property with respect to the $C^0$-topology. For this one first notes that any disc $D$ that is displaced some small distance $\varepsilon$ by some diffeomorphism $f$ can be moved back to its original position by a diffeomorphism with support in a $C\varepsilon$-neighbourhood $N_{C\varepsilon}(D)$ of $D$ for some universal constant $C>0$ using for example \cite[Lemma 7.1]{EPP}. This then enables one to build some $f_D$ that agrees with $f$ on the disk $D$ and has support in $N_{C\varepsilon}(D)$. Moreover, if the original diffeomorphism was the identity on some open neighborhoods of points on $\partial D$, the same can be assumed of $f_D$, up to shrinking the neighbourhoods slightly. Repeated application of this procedure applied to say a handle decomposition of the surface then gives a fragmentation of bounded length. 
\begin{lemma}[Surface Case: $C^0$-topology]\label{lem:bounded_frag_surface}
There is a neighborhood $\mathcal{U}_{Id}  \subseteq \mathrm{Diff}^r_0(S)$ of the identity map with respect to the $C^0$-topology such that any $f \in \mathcal{U}_{Id}$ can be  written as a product of $C_S$ diffeomorphisms supported on open disks for some constant $C_S$ depending only on the surface. 
\end{lemma}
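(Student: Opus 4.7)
The plan is to adapt the Edwards--Kirby \cite{EdKir} strategy of peeling off handles one at a time. First I would fix once and for all a smooth handle decomposition of $S$ with $N=N(S)$ handles $H_1,\ldots,H_N$ ordered by index (all 0-handles, then all 1-handles, then the 2-handles), together with slightly enlarged open disks $H_i\subset U_i\subset S$ and a collar width $\eta>0$ depending only on the decomposition. The neighborhood $\mathcal{U}_{Id}$ will then consist of those $f$ that are $\varepsilon$-close to the identity in $C^0$, for some $\varepsilon\ll\eta$ chosen small enough that $f(H_i)\subset U_i$ for every $i$.

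The key technical ingredient, which I would extract from \cite[Lemma 7.1]{EPP}, is a relative replacement statement: given a smooth closed disk $D\subset S$ and a diffeomorphism $f$ that is $\varepsilon$-close to the identity in $C^0$ with $\varepsilon$ small compared to the distance from $D$ to $\partial U$ for some enlarged disk $U\supset D$, there is a diffeomorphism $f_D$ supported in $U$ which agrees with $f$ on $D$; moreover, if $f$ is already the identity on a neighborhood of a compact subset $K\subset\partial D$, one may arrange that $f_D$ is the identity on a slightly shrunken neighborhood of $K$. This is essentially a bump-function cutoff applied to a canonical isotopy from $\id$ to $f$ on $U$, constructed using the fact that $f(D)\subset U$.

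With this in hand I would argue by induction. Starting from $f_0=f$, at step $i$ I apply the key ingredient to build a diffeomorphism $g_i$ supported in an open disk inside $U_i$ that agrees with $f_{i-1}$ on $H_i$ and is the identity on a neighborhood of $H_i\cap(H_1\cup\cdots\cup H_{i-1})$. Setting $f_i=g_i^{-1}f_{i-1}$, one checks that $f_i$ is the identity on $H_1\cup\cdots\cup H_i$ and is still $C^0$-close to the identity, with a slightly worse bound that is absorbed by choosing $\varepsilon$ small enough at the start. After $N$ steps we reach $f_N=\id$, so $f=g_1\cdots g_N$ is the desired fragmentation into at most $C_S:=N$ disk-supported pieces.

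The main obstacle is maintaining the identity on the previously handled region through the induction, which is precisely why the relative form of the key ingredient is needed. Because $f_{i-1}$ is already the identity on a neighborhood of the portion of $\partial H_i$ meeting $H_1\cup\cdots\cup H_{i-1}$, the relative clause of the lemma lets us choose $g_i$ equal to the identity on that same (slightly shrunken) neighborhood, so composing with $g_i^{-1}$ preserves the induction hypothesis. Ordering the handles by index guarantees that at each step $H_i\cap(H_1\cup\cdots\cup H_{i-1})$ is a controlled union of subarcs of $\partial H_i$, so the relative cutoff can be performed with constants depending only on the fixed handle decomposition; this uniformity is what allows the total number of pieces to be bounded by $C_S$ independently of $f$.
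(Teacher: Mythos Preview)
Your proposal is correct and follows essentially the same approach as the paper's sketch: the Edwards--Kirby handle-by-handle peeling, the same key ingredient \cite[Lemma~7.1]{EPP} to produce a disk-supported $f_D$ agreeing with $f$ on $D$ together with its relative version on portions of $\partial D$, and the same application to a handle decomposition. Your write-up is in fact more explicit about the induction and the relative bookkeeping than the paper's paragraph-length sketch.
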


\subsection*{Boundedness near the identity implies continuity}
It is well known that homogeneous quasi-morphisms vanish on diffeomorphisms supported on disks. This follows for example from the fact that the group of compactly supported diffeomorphisms of the (open) unit disk is uniformly perfect (cf.\ \cite{BIP}, \cite{Kot}), \cite{Tsu}).
\begin{lemma}\label{lem:QH_disc_vanish}
Let $\varphi \colon \mathrm{Diff}^r_0(M) \to \R$ be a homogeneous quasi-morphism. Then $\varphi(g) = 0$ for any element with support contained in a ball $\mathrm{supp}(g) \subset U \cong B^n$.
\end{lemma}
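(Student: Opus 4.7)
The plan is to exploit the uniform perfectness of the group of compactly supported diffeomorphisms of the ball, combined with the standard fact that homogeneous quasi-morphisms take uniformly bounded values on products of commutators.

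First, I would observe that if $\mathrm{supp}(g) \subset U \cong B^n$, then every power $g^k$ is also supported in $U$, so all powers of $g$ lie in the subgroup $\Diff^r_c(U)$ of compactly supported $C^r$-diffeomorphisms of the open ball. By the results of \cite{BIP} (in the topological category and for $r \neq \dim(M) + 1$) and \cite{Tsu, Tsu_even} (covering the remaining smooth cases on the ball), this subgroup is uniformly perfect: there is a constant $N$, depending only on $n$ and $r$, such that every element of $\Diff^r_c(U)$ can be written as a product of at most $N$ commutators from $\Diff^r_c(U)$. In particular each $g^k$ admits such a factorisation, with $N$ independent of $k$.

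Next, I would invoke two standard properties of any homogeneous quasi-morphism $\varphi$: it is conjugation-invariant (so $\varphi(aba^{-1}) = \varphi(b)$), and it satisfies $\varphi(b^{-1}) = -\varphi(b)$. Together with the defect inequality these yield $|\varphi([a,b])| \leq D(\varphi)$ for every commutator. Writing $g^k = c_1 \cdots c_N$ with each $c_i$ a commutator and applying the quasi-morphism inequality $N-1$ times then gives the uniform estimate
\[ |\varphi(g^k)| \leq (N-1)D(\varphi) + N D(\varphi) = (2N-1) D(\varphi), \]
a bound that does not depend on $k$.

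Finally, by homogeneity $\varphi(g^k) = k\varphi(g)$, so letting $k \to \infty$ forces $\varphi(g) = 0$. The only delicate point is to ensure that the uniform perfectness statement genuinely applies in all the regularity ranges $1 \leq r \leq \infty$ handled by the appendix; this is the reason the paper invokes \cite{BIP, Kot, Tsu} together, since no single reference covers every case uniformly, and I would simply follow suit rather than reprove uniform perfectness of $\Diff^r_c(B^n)$.
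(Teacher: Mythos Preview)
Your proof is correct and follows exactly the route the paper indicates: the paper only records the one-line justification that $\Diff^r_c(B^n)$ is uniformly perfect (citing \cite{BIP}, \cite{Kot}, \cite{Tsu}), from which the vanishing follows via Lemma~\ref{lem:uniform_QM} and homogeneity; you have simply unpacked that inference with explicit constants. The only minor imprecision is the bibliographic attribution of which reference covers which regularity for the ball, but the mathematics is sound.
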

We can then deduce that there is a uniform bound on some open neighborhood of the identity for {\em any} homogeneous quasi-morphism in terms of the defect. The following fact is often attributed to \cite{Shtern}.
\begin{lemma}\label{lem:bounded_small}
For any $1 \le r \le \infty$, there is a $C^r$-neighborhood $\mathcal{U}_{Id}  \subseteq \mathrm{Diff}^r_0(M)$ so that any homogeneous quasi-morphism is bounded by some constant multiple of the defect on $\mathcal{U}_{Id}$. In the case of surfaces this also holds for a $C^0$-neighborhood.
\end{lemma}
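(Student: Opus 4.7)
The plan is to combine the bounded fragmentation statements (Lemma~\ref{lem:bounded_frag} in the $C^r$ case with $r\ge 1$, and Lemma~\ref{lem:bounded_frag_surface} in the $C^0$ surface case) with the vanishing of homogeneous quasi-morphisms on disk-supported diffeomorphisms (Lemma~\ref{lem:QH_disc_vanish}). The neighborhood $\mathcal{U}_{Id}$ in the statement will simply be the one provided by the appropriate fragmentation lemma.

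First I would fix $f\in \mathcal{U}_{Id}$ and apply the fragmentation lemma to write
\[ f = h_1 \cdots h_N, \]
where $N\le C_M$ (respectively $C_S$) is a constant depending only on the manifold, and each $h_i$ has support contained in an open ball. Then, using the defining property of a quasi-morphism inductively, one has
\[ \left|\varphi(f) - \sum_{i=1}^N \varphi(h_i)\right| \le (N-1)\, D(\varphi). \]
Next, Lemma~\ref{lem:QH_disc_vanish} gives $\varphi(h_i)=0$ for every $i$, since each $h_i$ is supported in a ball. Combining these two observations yields
\[ |\varphi(f)| \le (N-1)\, D(\varphi) \le (C_M-1)\, D(\varphi), \]
which is the desired bound with constant $C_M-1$ depending only on $M$ (and, in the surface case, $C_S-1$ depending only on $S$).

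There is essentially no obstacle here once the two input lemmas are in hand: the argument is a one-line combination, and the only subtlety is making sure the chosen neighborhood is the one coming from the fragmentation lemma in the appropriate topology (so that the $C^0$-version applies in the surface case). Nothing needs to be said about the regularity $r$ beyond the fact that Lemma~\ref{lem:QH_disc_vanish} applies in all these regularities, which it does since it only uses uniform perfectness of the group of compactly supported diffeomorphisms of a disk.
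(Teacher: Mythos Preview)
Your proof is correct and essentially identical to the paper's own argument: both choose $\mathcal{U}_{Id}$ from the relevant fragmentation lemma, factor $f$ into at most $C_M$ disk-supported pieces, apply the quasi-morphism inequality iteratively, and use Lemma~\ref{lem:QH_disc_vanish} to kill the individual terms, arriving at the bound $(C_M-1)D(\varphi)$.
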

\begin{proof}
Since $f  = g_1 g_2 \cdots g_k$ can be factored as a product of $k = C_M$ diffeomorphisms supported on disks we conclude that
\begin{align*}
|\varphi(g_1 g_2 \cdots g_k)| & =|\varphi(g_1 g_2 \cdots g_k) -( \varphi( g_1) + \cdots  +\varphi( g_k)) | \\
 &\le (C_M-1) D(\varphi).
\end{align*}
Here we use the quasi-morphism property repeatedly as well as the fact that homogeneous quasi-morphisms vanish on maps that are supported on balls.
\end{proof}
\begin{theorem}\label{thm:automatic_cont}
Any homogeneous quasi-morphism $\varphi \colon \mathrm{Diff}^{r}_0(M) \to \R$ is continuous with respect to the $C^s$-topology for any $1 \le s \le r \le \infty $.
\end{theorem}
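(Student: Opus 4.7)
The plan is to combine Lemma~\ref{lem:bounded_small} with a homogeneity-based rescaling trick: boundedness on a neighborhood of the identity will be amplified to continuity everywhere by passing to high powers.

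First, I would treat continuity at the identity as a warm-up. Let $\mathcal{U}_{Id}$ be the neighborhood supplied by Lemma~\ref{lem:bounded_small}, on which $|\varphi| \leq (C_M-1) D(\varphi)$. If $f$ is sufficiently close to $\id$ in the appropriate topology, then for any fixed $n$ the power $f^n$ lies in $\mathcal{U}_{Id}$, so homogeneity yields
\[ |\varphi(f)| = \frac{1}{n}|\varphi(f^n)| \leq \frac{(C_M-1) D(\varphi)}{n}. \]
Given $\epsilon > 0$, first choose $n$ large enough to make the right-hand side less than $\epsilon$, and then shrink the neighborhood so that $f^n$ stays in $\mathcal{U}_{Id}$.

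Next, I would promote this to continuity at an arbitrary $f_0 \in \Diff_0^r(M)$ by the same idea applied to the ``difference'' $f_0^{-1} f$. The crucial computation is
\[ n|\varphi(f) - \varphi(f_0)| = |\varphi(f^n) - \varphi(f_0^n)| \leq |\varphi(f^n f_0^{-n})| + D(\varphi), \]
where the equality uses homogeneity and the inequality comes from applying the quasi-morphism defect to the factorization $f^n = (f^n f_0^{-n}) f_0^n$. Dividing by $n$,
\[ |\varphi(f) - \varphi(f_0)| \leq \frac{1}{n}\bigl(|\varphi(f^n f_0^{-n})| + D(\varphi)\bigr). \]
For fixed $n$ and $f_0$, the assignment $f \mapsto f^n f_0^{-n}$ is continuous from $\Diff^r_0(M)$ to itself in the $C^s$-topology (composition and inversion being $C^s$-continuous for $s \geq 1$) and sends $f_0$ to $\id$, so on a sufficiently small $C^s$-neighborhood of $f_0$ we have $f^n f_0^{-n} \in \mathcal{U}_{Id}$, and hence $|\varphi(f^n f_0^{-n})| \leq (C_M-1) D(\varphi)$. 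The bound becomes
\[ |\varphi(f) - \varphi(f_0)| \leq \frac{C_M \, D(\varphi)}{n}, \]
which is less than $\epsilon$ once $n > C_M D(\varphi)/\epsilon$.

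The hard part is ensuring the compatibility of topologies: Lemma~\ref{lem:bounded_small} delivers $\mathcal{U}_{Id}$ as a $C^r$-neighborhood (or a $C^0$-neighborhood in the surface case), but we need to exploit it in the $C^s$-topology for arbitrary $1 \leq s \leq r$. This is where the conjugation analysis of $f^n f_0^{-n} = \prod_{i=0}^{n-1} f_0^{-i}(f_0^{-1}f) f_0^{i}$ has to be done carefully: for $f$ close to $f_0$ in the chosen topology, each of the $n$ conjugates of $f_0^{-1} f$ is close to $\id$, and their product stays in the specified neighborhood of $\id$, uniformly in $n$ once $n$ is fixed. Finally, the resulting bound $|\varphi(f) - \varphi(f_0)| \leq C_M D(\varphi)/n$ depends on $\varphi$ only through $D(\varphi)$, which yields the pointwise equicontinuity of the family $\widetilde{\mathrm{QH}}_1(\Diff_0^r(S))$ needed in the proof of Theorem~\ref{thm:scl_cont}.
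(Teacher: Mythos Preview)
Your core argument is the same as the paper's: fix $n$, use homogeneity and the defect bound to get
\[ n|\varphi(f)-\varphi(f_0)| \le |\varphi(f^n f_0^{-n})| + D(\varphi), \]
invoke Lemma~\ref{lem:bounded_small} on a neighborhood where $f^n f_0^{-n}\in\mathcal{U}_{Id}$, and divide by $n$. The paper does exactly this in three lines (with the roles of $f$ and $g$ swapped), so your warm-up at the identity and the conjugation decomposition are extra decoration rather than a different route.

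Two small points are worth correcting. First, your telescoping identity is off: the correct decomposition is
\[ f^n f_0^{-n} \;=\; \prod_{i=0}^{n-1} f_0^{\,i}\,(f f_0^{-1})\,f_0^{-i}, \]
with $f f_0^{-1}$ rather than $f_0^{-1}f$ and conjugation by $f_0^{i}$ rather than $f_0^{-i}$. This is harmless, since continuity of the group operations already gives that $f\mapsto f^n f_0^{-n}$ is $C^s$-continuous and sends $f_0$ to the identity; you do not actually need the product formula.

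Second, the topology concern you raise is legitimate, but the conjugation analysis does not resolve it: if $f$ is only $C^s$-close to $f_0$, then each conjugate of $ff_0^{-1}$ (and hence their product) is only $C^s$-close to the identity, so you still cannot force $f^n f_0^{-n}$ into a $C^r$-neighborhood when $s<r$. The right fix is upstream: Lemma~\ref{lem:bounded_frag} is stated for every regularity, and the standard fragmentation procedure produces factors of the same regularity as the input. Hence one may take $\mathcal{U}_{Id}$ to be a $C^s$-neighborhood in $\Diff_0^r(M)$ from the start, and then your argument (and the paper's) goes through verbatim in the $C^s$-topology. The paper's proof glosses over this too, so you were right to flag it; just note that the resolution lies in the fragmentation lemma, not in the telescoping.
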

\begin{proof}
Let $f \in \mathrm{Diff}_0(M)$. Choose a neighborhood $\mathcal{V}_n$ so that for any $g \in \mathcal{V}_n$ we have that $f^n g^ {-n}$ lies in the neighborhood $\mathcal{U}_{Id}$ for some fixed $n$. Then using homogeneity and the quasi-morphism property we have that
\[n|\varphi (f) - \varphi (g)| = |\varphi (f^n) + \varphi (g^{-n})|   \le  |\varphi (f^n g^ {-n})| + D(\varphi) \le (C_S-1)D(\varphi) + D(\varphi). \]
Then dividing and letting $n \to \infty$ the continuity follows.
\end{proof}
\noindent The same argument used above shows that quasi-morphisms on surface diffeomorphism groups are continuous in the $C^0$-sense in view of Lemma \ref{lem:bounded_frag_surface}.
\begin{theorem}[Kotschick]\label{thm:automatic_cont_surface} Any homogeneous  quasi-morphism $\varphi \colon \mathrm{Diff}_0(S) \to \R$ is continuous with respect to the $C^0$-topology.
\end{theorem}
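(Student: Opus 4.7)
The plan is to imitate the proof of Theorem~\ref{thm:automatic_cont} verbatim, only replacing the use of Lemma~\ref{lem:bounded_frag} (bounded fragmentation in the $C^r$-topology) with Lemma~\ref{lem:bounded_frag_surface} (bounded fragmentation in the $C^0$-topology, which is available because $S$ is a surface). All other ingredients—vanishing of homogeneous quasi-morphisms on disk-supported elements (Lemma~\ref{lem:QH_disc_vanish}), the continuity of composition and inversion in the $C^0$-topology on $\Diff_0(S)$, and homogeneity—are the same.

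More concretely, the first step is to promote Lemma~\ref{lem:bounded_small} to the $C^0$-setting. Lemma~\ref{lem:bounded_frag_surface} provides a $C^0$-neighborhood $\mathcal{U}_{Id} \subseteq \Diff_0(S)$ of the identity on which every element factors as a product of at most $C_S$ diffeomorphisms supported in embedded disks. Combined with the fact that homogeneous quasi-morphisms vanish on each such factor, the defect inequality applied $C_S - 1$ times gives $|\varphi(g)| \le (C_S - 1) D(\varphi)$ for every $g \in \mathcal{U}_{Id}$.

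The second step is the standard Shtern-style trick. Fix $f \in \Diff_0(S)$ and $n \in \N$. Because composition and inversion are continuous on $\Diff_0(S)$ in the $C^0$-topology (this uses that $S$ is compact so that elements of $\Diff_0(S)$ are uniformly continuous), the map $g \mapsto f^n g^{-n}$ is $C^0$-continuous and sends $f$ to the identity. Hence there is a $C^0$-neighborhood $\mathcal{V}_n$ of $f$ with $f^n g^{-n} \in \mathcal{U}_{Id}$ for every $g \in \mathcal{V}_n$. Using homogeneity and the defect inequality we obtain
\[
n\,|\varphi(f) - \varphi(g)| = |\varphi(f^n) + \varphi(g^{-n})| \le |\varphi(f^n g^{-n})| + D(\varphi) \le C_S\, D(\varphi),
\]
and dividing by $n$ and letting $n \to \infty$ (choosing $\mathcal{V}_n$ for $n$ large, given $\epsilon > 0$) yields continuity of $\varphi$ at $f$.

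No step is really an obstacle; the only point that requires a moment of thought is the $C^0$-continuity of inversion and composition on $\Diff_0(S)$, which is a standard consequence of compactness of $S$ together with the fact that each $f \in \Diff_0(S)$ is uniformly continuous. Everything beyond that is a direct translation of the $C^r$ argument, justifying the ``same argument'' remark made just before the theorem.
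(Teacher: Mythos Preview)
Your proposal is correct and is exactly the approach the paper takes: the paper's proof is simply the remark that the argument of Theorem~\ref{thm:automatic_cont} goes through verbatim once Lemma~\ref{lem:bounded_frag} is replaced by the $C^0$ bounded-fragmentation Lemma~\ref{lem:bounded_frag_surface}. Your added justification of $C^0$-continuity of $(f,g)\mapsto f^n g^{-n}$ is the only extra detail, and it is implicit in the paper's argument as well.
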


\bibliographystyle{alpha}
\bibliography{dagger}

\end{document}